\numberwithin{equation}{section}
\def\Ext{\mbox{\rm Ext}\,} \def\Hom{\mbox{\rm Hom}}  
 \def\fin{\hfill$\square$}   \def\id{\mbox{\rm id}}
\def\End{\mbox{\rm End}\,}
\def\RR{\mbox{\rm RR}}
\theoremstyle{plain}
\newtheorem{theorem}{\bf Theorem}[section]
\newtheorem{lemma}[theorem]{\bf Lemma}
\newtheorem{corollary}[theorem]{\bf Corollary}
\newtheorem{proposition}[theorem]{\bf Proposition}
\theoremstyle{definition}
\newtheorem{definition}[theorem]{\bf Definition}
\newtheorem{remark}[theorem]{\bf Remark}
\newtheorem{example}[theorem]{\bf Example}
\newcommand{\bt}{\begin{theorem}}
\newcommand{\et}{\end{theorem}}
\newcommand{\bl}{\begin{lemma}}
\newcommand{\el}{\end{lemma}}
\newcommand{\bd}{\begin{definition}}
\newcommand{\ed}{\end{definition}}
\newcommand{\bc}{\begin{corollary}}
\newcommand{\ec}{\end{corollary}}
\newcommand{\bp}{\begin{proof}}
\newcommand{\ep}{\end{proof}}
\newcommand{\bx}{\begin{example}}
\newcommand{\ex}{\end{example}}
\newcommand{\br}{\begin{remark}}
\newcommand{\er}{\end{remark}}
\newcommand{\be}{\begin{equation}}
\newcommand{\ee}{\end{equation}}
\newcommand{\ba}{\begin{align}}
\newcommand{\ea}{\end{align}}
\newcommand{\bn}{\begin{enumerate}}
\newcommand{\en}{\end{enumerate}}
\newcommand{\bcs}{\begin{cases}}
\newcommand{\ecs}{\end{cases}}
\renewcommand{\section}{\@startsection{section}{1}{0mm}
  {-\baselineskip}{0.5\baselineskip}{\bf\leftline}}
\begin{document}

\title[Idempotent completion of extriangulated categories]{Idempotent completion of extriangulated categories}
\author[L. Wang, J. Wei, H. Zhang]{Li Wang, Jiaqun Wei,  Haicheng Zhang}
\address{Institute of Mathematics, School of Mathematical Sciences, Nanjing Normal University,
 Nanjing 210023, P. R. China.\endgraf}
\email{wl04221995@163.com (Wang); weijiaqun@njnu.edu.cn (Wei); zhanghc@njnu.edu.cn (Zhang).}

\author[T. Zhao]{Tiwei Zhao}
\address{School of Mathematical Sciences, Qufu Normal University, Qufu 273165, P. R. China}
\email{tiweizhao@qfnu.edu.cn (Zhao)}

\subjclass[2010]{18E30, 18E40.}
\keywords{Extriangulated category; Idempotent completion; Cotorsion pair;  Recollement.}

\begin{abstract}
Extriangulated categories were introduced by Nakaoka and Palu as a simultaneous generalization of exact categories and triangulated categories.  In this paper, we show that the idempotent completion of an extriangulated category admits a natural extriangulated structure.  As applications, we prove that cotorsion pairs in an extriangulated category induce cotorsion pairs in its idempotent completion under certain condition, and the idempotent completion of a recollement of extriangulated categories is still a  recollement.
\end{abstract}

\maketitle

\section{Introduction}
The concept of a triangulated category was given by Verdier and Grothendieck in
\cite{Ve} and has been investigated in many papers such as \cite{Bel,Ha,Iy2,Th}. Exact category and triangulated category are two fundamental structures in algebra and geometry. Recently, Nakaoka and Palu \cite{Na} introduced an extriangulated category which is extracting properties on triangulated categories and exact categories. The class of extriangulated categories contains triangulated categories and exact categories as examples, it is also closed under taking some ideal quotients. There have been many further researches on extriangulated categories, see for example, \cite{LiuY,Zhou1,Zhou2,Zhao}. Balmer and Schlichting \cite{Ba} showed that the idempotent completion of a triangulated category admits a triangulated structure. B\"{u}hler \cite{Bu} showed that the idempotent completion of an exact category is still an exact category. So one may ask whether the idempotent completion of an extriangulated category carries an extriangulated  structure. In this paper, we intend to give a positive answer, and thus it provides a unified framework for dealing with the idempotent completions of triangulated categories and exact categories.

Compared with exact categories or triangulated categories, the proof for an extriangulated category seems to be more complicated, because we have to construct a biadditive functor and its additive realization for the idempotent completion of an extriangulated category.

The notion of cotorsion pairs has been generalized to extriangulated categories in \cite{Na}.
Recollements of triangulated categories were introduced by Beilinson, Bernstein and Deligne \cite{BBD} in
connection with derived categories of sheaves on topological spaces with the idea that one triangulated category may be ``glued together" from two others. In \cite{CT}, Chen and Tang showed that the idempotent completion of a right (resp. left) recollement of triangulated categories is still a right (resp. left) recollement. As applications, we first introduce the right (left) recollements of extriangulated categories, and study the cotorsion pairs and recollements in extriangulated categories under the idempotent completion.

The paper is organized as follows: We summarize some basic definitions and propositions about idempotent completions and extriangulated categories in Section 2.
In Section 3, let $(\mathscr{C},\mathbb{E},\mathfrak{s})$ be a pre-extriangulated category, in the sense that it satisfies all axioms of an extriangulated category except $\rm(ET4)$ and $\rm(ET4)^{op}$. We show that the idempotent completion of $(\mathscr{C},\mathbb{E},\mathfrak{s})$ has a natural structure of a pre-extriangulated category. Furthermore, we extend this result to an extriangulated category. Section 4 is devoted to the cotorsion pairs and right (left) recollements in the idempotent completion of an extriangulated category. Explicitly, we prove that cotorsion pairs in an extriangulated category induce cotorsion pairs in its idempotent completion under certain condition, and the idempotent completion of a right (resp. left) recollement of extriangulated categories is still a right (resp. left) recollement.

\section{Preliminaries}

Throughout this paper, we assume, unless otherwise stated, that all considered categories are {additive}, and subcategories are {full}.

\subsection{Idempotent completions}
An additive category $\mathscr{C}$ is said to be {\em idempotent complete} if any idempotent morphism $e: A\rightarrow A$ {splits}. That is to say, there are two morphisms $p:A\rightarrow B$ and $q:B\rightarrow A$ such that $e=qp$ and $pq={\rm id}_{B}$. Remark that $\mathscr{C}$ is idempotent complete if and only if every idempotent morphism has a kernel, or every idempotent morphism has a cokernel.

\begin{definition}
 Let $\mathscr{C}$ be an additive category. The {\em idempotent completion} of $\mathscr{C}$ is the category $\widetilde{\mathscr{C}}$ defined as follows: Objects of $\widetilde{\mathscr{C}}$ are pairs $\widetilde{A}=(A,e_{a})$, where $A$ is an object of $\mathscr{C}$ and $e_{a}: A\rightarrow A$ is an idempotent morphism. A morphism in $\widetilde{\mathscr{C}}$ from $(A,e_{a})$ to $(B,e_{b})$ is a morphism $\alpha: A\rightarrow B$ in $\mathscr{C}$ such that $\alpha e_{a}=e_{b}\alpha=\alpha$.
\end{definition}

The assignment $A\mapsto (A,\id_{A})$ defines a functor $\iota: \mathscr{C}\rightarrow\widetilde{ \mathscr{C}}$. Obviously, the functor $\iota$ is fully faithful. Namely, we can view the category $\mathscr{C}$ as a full subcategory of $\widetilde{ \mathscr{C}}$.

\begin{remark}
(a) Note that triangulated categories with bounded t-structures are idempotent complete (cf. \cite{Ju}).

(b) The bounded derived category of an idempotent complete exact category is idempotent complete (cf. \cite{Ba}).

(c) Let $\mathscr{C}$ be the category of finite generated free modules over a ring $R$, its idempotent completion $\widetilde{\mathscr{C}}$ is equivalent to the category of finite generated projective modules over $R$ (cf. \cite{Li}).


\end{remark}

The following result is well-known (see, for example, \cite[Proposition 1.3]{Ba}).

\begin{lemma}\label{third}
The category $\widetilde{ \mathscr{C}}$ is an additive category, the functor $\iota: \mathscr{C}\rightarrow\widetilde{ \mathscr{C}}$ is additive and $\widetilde{ \mathscr{C}}$ is idempotent complete. Moreover, the functor$\iota$ induces an equivalence
$$\xymatrix{\Hom_{add}(\widetilde{ \mathscr{C}},L)\ar[r]^-{\sim}& \Hom_{add}(\mathscr{C},L)}$$
for each idempotent complete additive category $L$, where $\Hom_{add}$ denotes the category of additive functors.
 \end{lemma}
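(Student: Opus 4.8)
The plan is to verify the three structural claims by direct computation and then devote the real effort to the universal property, which is the substantive part.

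To see that $\widetilde{\mathscr{C}}$ is additive, I would give $\Hom_{\widetilde{\mathscr{C}}}((A,e_a),(B,e_b))$ the group structure inherited from $\Hom_{\mathscr{C}}(A,B)$: the conditions $\alpha e_a=e_b\alpha=\alpha$ are stable under addition, so this Hom-set is a subgroup of $\Hom_{\mathscr{C}}(A,B)$, and bilinearity of composition descends from $\mathscr{C}$. The zero object is $(0,0)$, and the biproduct of $(A,e_a)$ and $(B,e_b)$ is $(A\oplus B,\,e_a\oplus e_b)$; the inclusions and projections of $A\oplus B$ in $\mathscr{C}$, composed with the relevant idempotents, give genuine morphisms of $\widetilde{\mathscr{C}}$ satisfying the biproduct identities. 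Additivity of $\iota$ is then immediate, since $\iota(A)=(A,\id_A)$ acts as the identity on morphisms and carries biproducts to biproducts.

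For idempotent completeness, I would take an idempotent $\epsilon\colon(A,e_a)\to(A,e_a)$ in $\widetilde{\mathscr{C}}$, so that $\epsilon$ is an idempotent of $\mathscr{C}$ with $\epsilon e_a=e_a\epsilon=\epsilon$, and split it through the object $(A,\epsilon)$. With $p=\epsilon\colon(A,e_a)\to(A,\epsilon)$ and $q=\epsilon\colon(A,\epsilon)\to(A,e_a)$, one checks that $p$ and $q$ are legitimate morphisms, that $qp=\epsilon$ recovers the given idempotent, and that $pq=\id_{(A,\epsilon)}$, because the identity of $(A,\epsilon)$ is $\epsilon$ itself. The same computation shows more generally that every $(A,e_a)$ is a retract of $\iota(A)=(A,\id_A)$, a fact I will reuse below.

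The main work lies in the universal property, where I would show that the restriction functor $\iota^{*}\colon\Hom_{add}(\widetilde{\mathscr{C}},L)\to\Hom_{add}(\mathscr{C},L)$, $F\mapsto F\iota$, is essentially surjective and fully faithful. For essential surjectivity, given an additive $G\colon\mathscr{C}\to L$, the morphism $G(e_a)$ is an idempotent of $L$, hence splits since $L$ is idempotent complete; I would set $\widetilde{G}(A,e_a)$ equal to a chosen image of this splitting, extend to morphisms using the splitting data, and verify $\widetilde{G}\iota\cong G$. For full faithfulness, I would use that $(A,e_a)$ is a retract of $\iota(A)$, so that any natural transformation between additive functors on $\widetilde{\mathscr{C}}$ is both determined by and reconstructible from its restriction along $\iota$. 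I expect the essential surjectivity step to be the main obstacle: because $\widetilde{G}$ depends on arbitrary choices of splittings in $L$, the hard part will be to confirm that it is a well-defined additive functor, independent of these choices up to canonical isomorphism, that genuinely restricts to $G$ along $\iota$.
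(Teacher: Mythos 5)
Your outline is correct, and there is nothing in the paper to compare it against in detail: the authors do not prove this lemma but simply cite it as well-known (Balmer--Schlichting, Proposition~1.3), and your argument is precisely the standard one from that reference --- subgroup Hom-sets and the biproduct $(A\oplus B, e_a\oplus e_b)$ for additivity, splitting an idempotent $\epsilon$ on $(A,e_a)$ through $(A,\epsilon)$ via $p=q=\epsilon$, and the retract $(A,e_a)\hookrightarrow (A,\id_A)$ to get full faithfulness of restriction along $\iota$. The one place where you stop short of a proof is essential surjectivity: you correctly identify that $\widetilde{G}(A,e_a)$ must be a chosen splitting of the idempotent $G(e_a)$ in $L$ and that the real work is checking functoriality, additivity, and independence of choices, but you defer that verification rather than carry it out; since splittings of a given idempotent are unique up to a canonical isomorphism compatible with the retraction data, this check is routine, and your plan goes through.
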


\subsection{Extriangulated categories}
Let us recall some definitions and notations concerning extriangulated categories. For more definitions and details, we refer the reader to \cite{Na}.

Let $\mathscr{C}$ be an additive category and let $\mathbb{E}$: $\mathscr{C}^{op}\times\mathscr{C}\rightarrow Ab$ be a biadditive functor, where $Ab$ denotes the category of abelian groups. For any pair of objects $A$, $C\in\mathscr{C}$, an element $\delta\in \mathbb{E}(C,A)$ is called an {\em $\mathbb{E}$-extension}. Thus formally, an $\mathbb{E}$-extension is a triplet $(A,\delta,C)$. The zero element $0\in\mathbb{E}(C,A)$ is called the {\em split $\mathbb{E}$-extension}.

Let $\delta\in \mathbb{E}(C,A)$ be any $\mathbb{E}$-extension. By the functoriality of $\mathbb{E}$, for any $a\in \mathscr{C}(A,A')$ and $c\in {\mathscr{C}}(C',C)$, we have
$$\mathbb{E}(C,a)(\delta)\in\mathbb{E}(C,A')~~~~\text{and}~~~~\mathbb{E}(c,A)(\delta)\in\mathbb{E}(C',A).$$
We simply denote them by $a_{\ast}\delta$ and $c^{\ast}\delta$,~respectively. In this terminology, we have $\mathbb{E}(c,a)(\delta)=c^{\ast}a_{\ast}\delta=a_{\ast}c^{\ast}\delta$ in $\mathbb{E}(C',A')$. A morphism $(a,c)$: $(A,\delta,C)\rightarrow(A',\delta',C')$ of $\mathbb{E}$-extensions is a pair of morphisms $a\in \mathscr{C}(A,A')$ and $c\in {\mathscr{C}}(C,C')$ satisfying the equality
$$a_{\ast}\delta=c^{\ast}\delta',$$
which is simply denoted by $(a,c)$: $\delta\rightarrow\delta'$. We obtain the category $\mathbb{E}$-Ext($\mathscr{C}$) of $\mathbb{E}$-extensions, with composition and identities naturally induced from those in $\mathscr{C}$.

By Yoneda's lemma, any $\mathbb{E}$-extension $\delta\in \mathbb{E}(C,A)$ induces natural transformations
$$\delta_{\sharp}: \mathscr{C}(-,C)\rightarrow\mathbb{E}(-,A)~~\text{and}~~\delta^{\sharp}: \mathscr{C}(A,-)\rightarrow\mathbb{E}(C,-).$$ For any $X\in\mathscr{C}$, these
$(\delta_{\sharp})_X$ and $(\delta^{\sharp})_X$ are defined by
$(\delta_{\sharp})_X:  \mathscr{C}(X,C)\rightarrow\mathbb{E}(X,A), f\mapsto f^\ast\delta$ and $(\delta^{\sharp})_X:   \mathscr{C}(A,X)\rightarrow\mathbb{E}(C,X), g\mapsto g_\ast\delta$.

Two sequences of morphisms $A\stackrel{x}{\longrightarrow}B\stackrel{y}{\longrightarrow}C$ and $A\stackrel{x'}{\longrightarrow}B'\stackrel{y'}{\longrightarrow}C$ in $\mathscr{C}$ are said to be {\em equivalent} if there exists an isomorphism $b\in \mathscr{C}(B,B')$ such that the following diagram
$$\xymatrix{
  A \ar@{=}[d] \ar[r]^-{x} & B\ar[d]_{b}^-{\simeq} \ar[r]^-{y} & C\ar@{=}[d] \\
  A \ar[r]^-{x'} &B' \ar[r]^-{y'} &C  }$$ is commutative.
We denote the equivalence class of $A\stackrel{x}{\longrightarrow}B\stackrel{y}{\longrightarrow}C$ by $[A\stackrel{x}{\longrightarrow}B\stackrel{y}{\longrightarrow}C]$. In addition, for any $A,C\in\mathscr{C}$, we denote as
$$0=[A\stackrel{\tiny\begin{pmatrix} 1\\0\end{pmatrix}}{\longrightarrow}A\oplus C\stackrel{\tiny\begin{pmatrix} 0&1\end{pmatrix}}{\longrightarrow}C].$$
For any two classes $[A\stackrel{x}{\longrightarrow}B\stackrel{y}{\longrightarrow}C]$ and $[A'\stackrel{x'}{\longrightarrow}B'\stackrel{y'}{\longrightarrow}C']$, we denote as
$$[A\stackrel{x}{\longrightarrow}B\stackrel{y}{\longrightarrow}C]\oplus[A'\stackrel{x'}{\longrightarrow}B'\stackrel{y'}{\longrightarrow}C']=
[A\oplus A'\stackrel{x\oplus x'}{\longrightarrow}B\oplus B'\stackrel{y\oplus y'}{\longrightarrow}C\oplus C'].$$
Let $\delta\in\mathbb{E}(C,A)$, $\delta'\in\mathbb{E}(C',A')$ be any pair of $\mathbb{E}$-extensions. Let $C\stackrel{l_{C}}{\longrightarrow}C\oplus C'\stackrel{l_{C'}}{\longleftarrow}C'$ and $A\stackrel{p_{A}}{\longleftarrow}A\oplus A'\stackrel{p_{A'}}{\longrightarrow}A'$ be the coproduct and product in $\mathscr{C}$, respectively. Then the biadditivity of $\mathbb{E}$ implies
\begin{equation}\label{zhkz}\mathbb{E}(C\oplus C',A\oplus A')\cong\mathbb{E}(C,A)\oplus\mathbb{E}(C,A')\oplus\mathbb{E}(C',A)\oplus\mathbb{E}(C',A').\end{equation}
Thus we set $\delta\oplus\delta'=(\delta,0,0,\delta')\in\mathbb{E}(C\oplus C',A\oplus A')$ through this isomorphism. In particular, if $C=C'$ and $A=A'$, then the sum
\begin{equation}\label{cha}\delta+\delta'=\Delta^{\ast}\nabla_{\ast}(\delta\oplus\delta')\in\mathbb{E}(C,A)\end{equation}
where $\Delta=\footnotesize\begin{pmatrix} 1 \\1 \end{pmatrix}: C\rightarrow C\oplus C$, and $\nabla=(1,1):A\oplus A\rightarrow A$.

In what follows, we write an element in (\ref{zhkz}) in the form of matrices. For example, we will write $\delta=\footnotesize\begin{pmatrix} \delta_{1}&\delta_{3}\\ \delta_{2}&\delta_{4} \end{pmatrix}$ for $\delta=(\delta_1,\delta_2,\delta_3,\delta_4)\in\mathbb{E}(C\oplus C',A\oplus A')$.

\begin{definition}
Let $\mathfrak{s}$ be a correspondence which associates an equivalence class $\mathfrak{s}(\delta)=[A\stackrel{x}{\longrightarrow}B\stackrel{y}{\longrightarrow}C]$ to any $\mathbb{E}$-extension $\delta\in\mathbb{E}(C,A)$ . This $\mathfrak{s}$ is called a {\em realization} of $\mathbb{E}$ if for any morphism $(a,c):\delta\rightarrow\delta'$ with $\mathfrak{s}(\delta)=[\Delta_{1}]$ and $\mathfrak{s}(\delta')=[\Delta_{2}]$, there is a commutative diagram as follows:
$$\xymatrix{
\Delta_{1}\ar[d] & A \ar[d]_-{a} \ar[r]^-{x} & B  \ar[r]^{y}\ar[d]_-{b} & C \ar[d]_-{c}    \\
 \Delta_{2}&A\ar[r]^-{x'} & B \ar[r]^-{y'} & C .   }
$$  A realization $\mathfrak{s}$ of $\mathbb{E}$ is said to be {\em additive} if it satisfies the following conditions:

(a) For any $A,~C\in\mathscr{C}$, the split $\mathbb{E}$-extension $0\in\mathbb{E}(C,A)$ satisfies $\mathfrak{s}(0)=0$.

(b) $\mathfrak{s}(\delta\oplus\delta')=\mathfrak{s}(\delta)\oplus\mathfrak{s}(\delta')$ for any pair of $\mathbb{E}$-extensions $\delta$ and $\delta'$.
\end{definition}

Let $\mathscr{C}$ be an additive category and $\mathfrak{s}$ be an additive realization of $\mathbb{E}$.

$\bullet$ We call a sequence $A\stackrel{x}{\longrightarrow}B\stackrel{y}{\longrightarrow}C$ a {\em conflation} if its equivalence class is equal to  $\mathfrak{s}(\delta)$ for some $\delta\in\mathbb{E}(C,A)$ and we say that this conflation realizes $\delta$. In this case, we call the pair $(A\stackrel{x}{\longrightarrow}B\stackrel{y}{\longrightarrow}C,\delta)$ an $\mathbb{E}$-{\em triangle}, and write it in the following way.
$$A\stackrel{x}{\longrightarrow}B\stackrel{y}{\longrightarrow}C\stackrel{\delta}\dashrightarrow$$

$\bullet$ Let $\mathcal{D}\subseteq\mathscr{C}$ be a full additive subcategory, which is closed under isomorphisms. The subcategory $\mathcal{D}$ is said to be {\em extension-closed}, if for any conflation $A\stackrel{x}{\longrightarrow}B\stackrel{y}{\longrightarrow}C$ which satisfies $A,C\in\mathcal{D}$, then $B\in\mathcal{D}$.

$\bullet$ For any morphism $(a,c):\delta\rightarrow\delta'$ with $\delta\in\mathbb{E}(C,A)$ and $\delta'\in\mathbb{E}(C',A')$, there is a commutative diagram with rows being $\mathbb{E}$-triangles:
$$\xymatrix{
  A \ar[d]_{a} \ar[r]^{x} & B  \ar[r]^{y}\ar[d]_{b} & C \ar[d]_{c} \ar@{-->}[r]^{\delta} &  \\
 A'\ar[r]^{x'} & B' \ar[r]^{y'} & C' \ar@{-->}[r]^{\delta'} &    }
$$
We call $(a,b,c)$ a {\em morphism of $\mathbb{E}$-triangles}, or a {\em realization} of $(a,c):\delta\rightarrow\delta'$. 

\begin{definition}
(\cite[Definition 2.12]{Na})
We call the triplet $(\mathscr{C}, \mathbb{E},\mathfrak{s})$ an {\em extriangulated category} if it satisfies the following conditions:\\
$\rm(ET1)$ $\mathbb{E}$: $\mathscr{C}^{op}\times\mathscr{C}\rightarrow Ab$ is a biadditive functor.\\
$\rm(ET2)$ $\mathfrak{s}$ is an additive realization of $\mathbb{E}$.\\
$\rm(ET3)$ Let $\delta\in\mathbb{E}(C,A)$ and $\delta'\in\mathbb{E}(C',A')$ be any pair of $\mathbb{E}$-extensions, realized as
$\mathfrak{s}(\delta)=[A\stackrel{x}{\longrightarrow}B\stackrel{y}{\longrightarrow}C]$, $\mathfrak{s}(\delta')=[A'\stackrel{x'}{\longrightarrow}B'\stackrel{y'}{\longrightarrow}C']$. For any commutative square in $\mathscr{C}$
$$\xymatrix{
  A \ar[d]_{a} \ar[r]^{x} & B \ar[d]_{b} \ar[r]^{y} & C \\
  A'\ar[r]^{x'} &B'\ar[r]^{y'} & C'}$$
there exists a morphism $(a,c)$: $\delta\rightarrow\delta'$ which is realized by $(a,b,c)$.\\
$\rm(ET3)^{op}$~Dual of $\rm(ET3)$.\\
$\rm(ET4)$~Let $\delta\in\mathbb{E}(D,A)$ and $\delta'\in\mathbb{E}(F,B)$ be $\mathbb{E}$-extensions realized by
$A\stackrel{f}{\longrightarrow}B\stackrel{f'}{\longrightarrow}D$ and $B\stackrel{g}{\longrightarrow}C\stackrel{g'}{\longrightarrow}F$, respectively.
Then there exist an object $E\in\mathscr{C}$, a commutative diagram
$$\xymatrix{
  A \ar@{=}[d]\ar[r]^-{f} &B\ar[d]_-{g} \ar[r]^-{f'} & D\ar[d]^-{d} \\
  A \ar[r]^-{h} & C\ar[d]_-{g'} \ar[r]^-{h'} & E\ar[d]^-{e} \\
   & F\ar@{=}[r] & F   }$$
in $\mathscr{C}$, and an $\mathbb{E}$-extension $\delta''\in \mathbb{E}(E,A)$ realized by $A\stackrel{h}{\longrightarrow}C\stackrel{h'}{\longrightarrow}E$, which satisfy the following compatibilities:\\
$(\textrm{i})$ $D\stackrel{d}{\longrightarrow}E\stackrel{e}{\longrightarrow}F$ realizes $\mathbb{E}(F,f')(\delta')$,\\
$(\textrm{ii})$ $\mathbb{E}(d,A)(\delta'')=\delta$,\\
$(\textrm{iii})$ $\mathbb{E}(E,f)(\delta'')=\mathbb{E}(e,B)(\delta')$.\\
$\rm(ET4)^{op}$ Dual of $\rm(ET4)$.
\end{definition}

\begin{example}\label{exa}
(a)  Exact categories, triangulated categories and extension-closed subcategories of an extriangulated category are
extriangulated categories (cf. \cite{Na}).

(b) Let $\mathscr{C}$ be an extriangulated category. An object $P$ in $\mathscr{C}$ is called {\em projective} if for any $\mathbb{E}$-triangle $A\stackrel{x}{\longrightarrow}B\stackrel{y}{\longrightarrow}C\stackrel{\delta}\dashrightarrow$ and any morphism $c$ in $\mathscr{C}(P,C)$, there exists $b$ in $\mathscr{C}(P,B)$ such that $yb=c$. We denote the full subcategory of projective objects in $\mathscr{C}$ by $\mathcal{P}$. Dually, the {\em injective} objects are defined, and the full subcategory of injective objects in $\mathscr{C}$ is denoted by $\mathcal{I}$. Then $\mathscr{C}/(\mathcal{P}\cap \mathcal{I})$ is an extriangulated category which is neither exact nor triangulated in general (cf. \cite[Proposition 3.30]{Na}).
\end{example}
\begin{definition} We call the triplet $(\mathscr{C},\mathbb{E},\mathfrak{s})$ a {\em pre-extriangulated category} if it satisfies all except $\rm(ET4)$ and $\rm(ET4)^{op}$.
\end{definition}

There is a natural way to produce pre-extriangulated categories. In detail, let $(\mathscr{C},\mathbb{E},\mathfrak{s})$ be an extriangulated category and let $\widehat{\mathbb{E}}$ be an additive subfunctor of $\mathbb{E}$. Then for any $a\in\mbox{Hom}_{\mathscr{C}}(A,A')$, $c\in\mbox{Hom}_{\mathscr{C}}(C',C)$, and $\delta\in\widehat{\mathbb{E}}(C,A)$, we have
 $a_\ast\delta\in\widehat{\mathbb{E}}(C,A')$ and $c^\ast \delta\in\widehat{\mathbb{E}}(C',A)$.
 Define $\widehat{\mathfrak{s}}$ to be the restriction of $\mathfrak{s}$ to $\widehat{\mathbb{E}}$, that is, it is defined by
$\widehat{\mathfrak{s}}(\delta)=\mathfrak{s}(\delta)$ for any $\widehat{\mathbb{E}}$-extension $\delta$. By \cite[Claim 3.8]{HLN}, the triplet $(\mathscr{C},\widehat{\mathbb{E}},\widehat{\mathfrak{s}})$ satisfies (ET1), (ET2), (ET3) and (ET3)$^{\tiny\mbox{op}}$, that is, $(\mathscr{C},\widehat{\mathbb{E}},\widehat{\mathfrak{s}})$  is a pre-extriangulated category.

In fact, the following example shows that the category of finitely generated modules over an Artin algebra always admits a pre-extriangulated structure which is not extriangulated.

\begin{example}\label{e2.7}
Let $\Lambda$ be an Artin algebra, and $\mathscr{C}={\rm mod} \Lambda$ the category of finitely generated $\Lambda$-modules.
Let $\mathbb{E}(C,A)=\mbox{Ext}^1_\Lambda(C,A)$ for any $A,C\in{\rm mod} \Lambda$, and define   $\widehat{\mathbb{E}}(C,A)$ to be the subgroup of $\Ext^1_\Lambda(C,A)$, which is generated by the corresponding almost split sequences in ${\rm mod} \Lambda$. Then $(\mathscr{C},\widehat{\mathbb{E}},\widehat{\mathfrak{s}})$ is a pre-extriangulated
category but not an extriangulated
category, see \cite{DRSSK} for more details.
\end{example}

\begin{proposition} \cite[Proposition 3.3]{Na}
Let $(\mathscr{C},\mathbb{E},\mathfrak{s})$ be a pre-extriangulated category. For any $\mathbb{E}$-triangle $A\stackrel{x}{\longrightarrow}B\stackrel{y}{\longrightarrow}C\stackrel{\delta}\dashrightarrow$, the following sequences of natural transformations are exact.
$$\mathscr{C}(C,-)\rightarrow \mathscr{C}(B,-)\rightarrow \mathscr{C}(A,-)\stackrel{\delta^{\sharp}}\rightarrow\mathbb{E}(C,-)\rightarrow\mathbb{E}(B,-),$$
$$\mathscr{C}(-,A)\rightarrow \mathscr{C}(-,B)\rightarrow \mathscr{C}(-,C)\stackrel{\delta_{\sharp}}\rightarrow\mathbb{E}(-,A)\rightarrow\mathbb{E}(-,B).$$
\end{proposition}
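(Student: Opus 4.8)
The plan is to verify exactness objectwise: a sequence of natural transformations between additive functors $\mathscr{C}\to Ab$ (or $\mathscr{C}^{op}\to Ab$) is exact exactly when its evaluation at every object is an exact sequence of abelian groups. Moreover the two displayed sequences are interchanged by passing to the opposite pre-extriangulated category $(\mathscr{C}^{op},\mathbb{E}^{op},\mathfrak{s}^{op})$, under which $\rm(ET3)$ and $\rm(ET3)^{op}$ are swapped; hence it suffices to prove the second one. Fixing $Z\in\mathscr{C}$, I must show exactness of
\[
\mathscr{C}(Z,A)\xrightarrow{\,x\circ-\,}\mathscr{C}(Z,B)\xrightarrow{\,y\circ-\,}\mathscr{C}(Z,C)\xrightarrow{\,(\delta_\sharp)_Z\,}\mathbb{E}(Z,A)\xrightarrow{\,x_\ast\,}\mathbb{E}(Z,B)
\]
at its three interior terms, where $(\delta_\sharp)_Z(h)=h^\ast\delta$ and $x_\ast=\mathbb{E}(Z,x)$.

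The inclusions $\im\subseteq\Ker$ at the three spots amount to the identities $y\circ x=0$, $y^\ast\delta=0$ and $x_\ast\delta=0$. These are the standard ``complex'' relations attached to an $\mathbb{E}$-triangle and follow from the biadditivity of $\mathbb{E}$, the additivity of $\mathfrak{s}$, and the fact that the realizations of $y^\ast\delta$ and $x_\ast\delta$ are weak pullback and weak pushout squares: such squares split, so $y^\ast\delta=0$ and $x_\ast\delta=0$, and comparing components in the realized morphism $(\mathrm{id}_A,w,y)\colon y^\ast\delta\to\delta$ then yields $y\circ x=0$. I regard this bookkeeping as routine.

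For the reverse inclusions I would argue as follows. \emph{Exactness at $\mathscr{C}(Z,C)$}: if $h\in\mathscr{C}(Z,C)$ satisfies $h^\ast\delta=0$, then the $\mathbb{E}$-triangle realizing $h^\ast\delta$ is split and $(\mathrm{id}_A,h)\colon h^\ast\delta\to\delta$ is a morphism of $\mathbb{E}$-extensions; the defining property of the realization $\mathfrak{s}$ furnishes a commutative ladder whose middle vertical $b$ yields $g:=b\circ\binom{0}{1}$ with $y\circ g=h$. \emph{Exactness at $\mathscr{C}(Z,B)$} (that $x$ is a weak kernel of $y$): given $g$ with $y\circ g=0$, apply $\rm(ET3)^{op}$ to the morphism from the split $\mathbb{E}$-triangle $Z\xrightarrow{\binom{1}{0}}Z\oplus A\xrightarrow{(0\ 1)}A$ to $\delta$, taking middle map $(g\ \ 0)\colon Z\oplus A\to B$ and right map $0\colon A\to C$ (the relevant deflation-side square commutes because $y\circ g=0$); the axiom returns $f\colon Z\to A$ with $x\circ f=g$.

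\emph{Exactness at $\mathbb{E}(Z,A)$}, which I expect to be the main obstacle: given $\theta\in\mathbb{E}(Z,A)$ with $x_\ast\theta=0$, realize $\theta$ by $A\xrightarrow{p}M\xrightarrow{q}Z$ and push it out along $x$ to obtain a realization $B\xrightarrow{p'}M'\xrightarrow{q'}Z$ of $x_\ast\theta$ together with a middle map $m\colon M\to M'$ satisfying $m\circ p=p'\circ x$. Since $x_\ast\theta=0$, this lower $\mathbb{E}$-triangle splits, so $p'$ admits a retraction $n$; then $(n\circ m)\circ p=x$, and the inflation-side square with verticals $\mathrm{id}_A$ and $n\circ m$ over $p$ and $x$ commutes. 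Feeding this square into $\rm(ET3)$ produces $h\colon Z\to C$ with $(\mathrm{id}_A,h)\colon\theta\to\delta$, i.e. $\theta=h^\ast\delta=(\delta_\sharp)_Z(h)$. The delicate point is precisely the manufacture of the splitting datum $n$ from the vanishing of $x_\ast\theta$ and its insertion into $\rm(ET3)$; this is the step that uses the axioms beyond the bare realization property, and it is what one must check survives at the pre-extriangulated level, with no appeal to $\rm(ET4)$. With all reverse inclusions in hand, the first displayed sequence then follows by the duality noted at the outset.
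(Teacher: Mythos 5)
The paper offers no proof of this statement: it is quoted directly from \cite[Proposition 3.3]{Na}, so there is no internal argument to compare against. Judged on its own terms, your three reverse-inclusion arguments are correct and are essentially the standard ones from the cited source: factoring $h$ through $y$ when $h^{\ast}\delta=0$ by realizing $(\mathrm{id}_A,h)\colon 0\to\delta$; the weak-kernel property of $x$ via $\rm(ET3)^{op}$ applied to the split triangle $Z\to Z\oplus A\to A$ with middle map $(g\ \ 0)$ and right map $0$; and the recovery of $\theta=h^{\ast}\delta$ from $x_{\ast}\theta=0$ by retracting the split realization of $x_{\ast}\theta$ and feeding the resulting left-hand square into $\rm(ET3)$. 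All of these use only $\rm(ET2)$, $\rm(ET3)$ and $\rm(ET3)^{op}$, so they do survive at the pre-extriangulated level as required.

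The one genuine soft spot is your justification of the forward inclusions $y\circ x=0$, $y^{\ast}\delta=0$ and $x_{\ast}\delta=0$, which you derive from ``the realizations of $y^{\ast}\delta$ and $x_{\ast}\delta$ are weak pullback and weak pushout squares, and such squares split.'' As written this is circular: the weak pullback/pushout property of those squares, and the implication that a conflation with split-epi deflation realizes the zero extension, are themselves consequences of the very exact sequences being proved (they appear as later corollaries in \cite{Na}). The repair is a one-line application of $\rm(ET3)$ in exactly the style of your other steps: take the rows $A\xrightarrow{\binom{1}{0}}A\oplus B\xrightarrow{(0\ \ 1)}B$ (realizing $0\in\mathbb{E}(B,A)$) and $A\xrightarrow{x}B\xrightarrow{y}C$, with vertical maps $\mathrm{id}_A$ and $(x\ \ \mathrm{id}_B)$ making the left square commute; $\rm(ET3)$ returns $c\colon B\to C$ with $c^{\ast}\delta=(\mathrm{id}_A)_{\ast}0=0$ and $y\circ(x\ \ \mathrm{id}_B)=c\circ(0\ \ 1)$, i.e. $(yx\ \ y)=(0\ \ c)$, which gives $yx=0$ and $y^{\ast}\delta=0$ simultaneously; $x_{\ast}\delta=0$ follows dually from $\rm(ET3)^{op}$. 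With that substitution your argument is complete and correct.
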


Let us give two preliminary lemmas.

\begin{lemma}\label{first} Let $(\mathscr{C},\mathbb{E},\mathfrak{s})$ be a pre-extriangulated category. Consider the following commutative diagram in $\mathscr{C}$
$$\xymatrix{
  A \ar[d]_-{p} \ar[r]^-{u} & B  \ar[r]^-{v} \ar[d]^-{b}& C \ar[d]_-{q} \ar@{-->}[r]^-{\delta} &  \\
 A\ar[r]^-{u} & B \ar[r]^-{v} & C \ar@{-->}[r]^-{\delta} &    }
$$ in which the rows are $\mathbb{E}$-triangles. Suppose that any two of the triplet $(p,b,q)$ are idempotents, then the third morphism can be replaced by an idempotent morphism such that the diagram above is still commutative.

\end{lemma}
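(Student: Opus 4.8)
The plan is to treat the three cases by a single device. In each case let $\theta\in\{p,b,q\}$ be the one map that is \emph{not} assumed idempotent, and set $e=\theta^2-\theta$. Note first that $e$ automatically commutes with $\theta$, since $e\theta=\theta^3-\theta^2=\theta e$. I claim the whole lemma reduces to the single identity $e^2=0$. Indeed, granting $e^2=0$, I replace $\theta$ by $\theta'=3\theta^2-2\theta^3$; writing $\theta^2=\theta+e$ one gets $\theta'=\theta+e-2\theta e$, and a direct expansion using only $e^2=0$ and $\theta e=e\theta$ shows $(\theta')^2=\theta'$. Thus $\theta'$ is idempotent. Moreover $\theta'-\theta=e-2\theta e$ is assembled from $e$, so the remaining verifications—that the two commuting squares survive (e.g.\ $b'u=3b^2u-2b^3u=up$) and that the $\mathbb{E}$-triangle compatibility $p_\ast\delta=q^\ast\delta$ is preserved—become routine, the latter because $\theta'-\theta$ pushes, resp.\ pulls, $\delta$ to $0$.

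Everything therefore rests on proving $e^2=0$, and this is the main obstacle: in a pre-extriangulated category the inflation $u$ need not be monic and the deflation $v$ need not be epic, so $e^2=0$ cannot be obtained by cancellation. The key is to use the idempotency of the two neighbouring maps together with the fact that the diagram is a morphism of $\mathbb{E}$-triangles (so $p_\ast\delta=q^\ast\delta$): the defect $e$ is then annihilated on one side by an adjacent structure map and, via the long exact sequences of the preceding Proposition, simultaneously factors through the opposite adjacent structure map. These two facts together force $e^2=0$.

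In detail, when $p,q$ are idempotent (so $\theta=b$, $e=b^2-b$), the relations $bu=up$ and $vb=qv$ give $eu=up^2-up=0$ and $ve=q^2v-qv=0$; feeding $ve=0$ into exactness of $\mathscr{C}(B,A)\xrightarrow{u_\ast}\mathscr{C}(B,B)\xrightarrow{v_\ast}\mathscr{C}(B,C)$ writes $e=us$ for some $s$, whence $e^2=(eu)s=0$. When $p,b$ are idempotent (so $\theta=q$, $e=q^2-q$), one has $ev=vb^2-vb=0$, while $e^\ast\delta=(q^2)^\ast\delta-q^\ast\delta=(p^2)_\ast\delta-p_\ast\delta=0$ (using $q^\ast\delta=p_\ast\delta$ and $p^2=p$); exactness of $\mathscr{C}(C,B)\xrightarrow{v_\ast}\mathscr{C}(C,C)\xrightarrow{\delta_\sharp}\mathbb{E}(C,A)$ then writes $e=vg$, so $e^2=(ev)g=0$. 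The case $b,q$ idempotent (so $\theta=p$, $e=p^2-p$) is dual: $ue=up^2-up=0$ and $e_\ast\delta=(p^2)_\ast\delta-p_\ast\delta=(q^2)^\ast\delta-q^\ast\delta=0$ (using $p_\ast\delta=q^\ast\delta$ and $q^2=q$); exactness of $\mathscr{C}(B,A)\xrightarrow{u^\ast}\mathscr{C}(A,A)\xrightarrow{\delta^\sharp}\mathbb{E}(C,A)$ writes $e=\xi u$, so $e^2=\xi(ue)=0$.

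In each case the appropriate idempotency hypothesis is used exactly twice (once to kill $e$ on one side, once to annihilate the relevant push/pull of $\delta$), and the two long exact sequences supply the complementary factorization; the uniform formula $\theta'=3\theta^2-2\theta^3$ then finishes. I expect the only genuinely delicate point to be bookkeeping which exact sequence and which of $u,v,\delta_\sharp,\delta^\sharp$ is needed in each case, since the roles of $A$, $B$, $C$ are not symmetric; the algebra of the idempotent lift itself is elementary once $e^2=0$ is in hand.
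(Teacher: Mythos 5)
Your proposal is correct and follows essentially the same route as the paper: form the defect $e=\theta^2-\theta$, kill it on one side by the commuting squares (or by the identity $p_\ast\delta=q^\ast\delta$), factor it through the adjacent structure map via the long exact sequences to get $e^2=0$, and pass to the idempotent $\theta+e-2\theta e$. The only (immaterial) divergence is in the middle-map case, where the paper factors the defect through $v$ using $hu=0$ while you factor it through $u$ using $ve=0$; your write-up of the two outer cases also makes explicit the use of $\delta_\sharp$ and $\delta^\sharp$ that the paper leaves as ``similar.''
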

\begin{proof}
First of all, suppose that the first and third morphisms are idempotents. Then $b^{2}u=bup=up^{2}=up$ and similarly $vb^{2}=qv$. Set $h=b^{2}-b$, then there exists a morphism $s:C\rightarrow B$ such that $sv=h$ since $hu=0$. It follows that $h^{2}=svh=sv(b^{2}-b)=0$. Now, let $r=b+h-2bh$~and note that $bh=hb$,~then $r^{2}=b^{2}+2bh-4b^2h=b+h+2bh-4(b+h)h=b+h-2bh=r$. However, $ru=(b+h-2bh)u=bu=up$ and $vr=v(b+h-2bh)=qv+vh-2qvh=qv$. Hence, we obtain an idempotent morphism $r$ such that the above diagram is commutative.

Now, suppose the first two morphisms are idempotents. It should be noted that $(q^{2})^{\ast}\delta=q^{\ast}p_{\ast}\delta=p_{\ast}q^{\ast}\delta=(p^{2})_{\ast}\delta=p_{\ast}\delta$. Similar to the previous proof, set $\eta=q^{2}-q$ and $\sigma=q+\eta-2\eta h$, we can obtain an idempotent morphism $\sigma$ such that $(p,b,\sigma)$ realizes $(p,\sigma):\delta\rightarrow\delta$. Similarly, we can prove the case that the second and third morphisms are idempotents.
\end{proof}

\begin{lemma}\label{second}
Let $(\mathscr{C},\mathbb{E},\mathfrak{s})$ be a pre-extriangulated category. A morphism sequence
$$\bigtriangleup:~~~~~~~A\oplus A'\stackrel{\tiny\begin{pmatrix} x&0\\0&x' \end{pmatrix}}{\longrightarrow}B\oplus B'\stackrel{\tiny\begin{pmatrix} y&0 \\0&y' \end{pmatrix}}{\longrightarrow}C\oplus C'\stackrel{\tiny\begin{pmatrix} \delta_{1}&0 \\0&\delta_{2} \end{pmatrix}}\dashrightarrow$$
is an $\mathbb{E}$-triangle for $\delta_{1}\oplus\delta_{2}$, where $\delta_{1}\in\mathbb{E}(C,A)$ and $\delta_{2}\in\mathbb{E}(C',A')$, if and only if
$$~~~~\triangle_{1}:~A\stackrel{x}{\longrightarrow}B\stackrel{y}{\longrightarrow}C\stackrel{\delta_{1}}\dashrightarrow~~and~~~
\triangle_{2}:~A'\stackrel{x'}{\longrightarrow}B'\stackrel{y'}{\longrightarrow}C'\stackrel{\delta_{2}}\dashrightarrow$$
are $\mathbb{E}$-triangles for $\delta_{1}$ and $\delta_{2}$, respectively.
\end{lemma}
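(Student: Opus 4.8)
The plan is to treat the two implications separately, since they are of very different character. The ``if'' direction is immediate from the additivity of the realization $\mathfrak{s}$: if $\triangle_1$ and $\triangle_2$ realize $\delta_1$ and $\delta_2$, then $\mathfrak{s}(\delta_1)=[\triangle_1]$ and $\mathfrak{s}(\delta_2)=[\triangle_2]$, so condition (b) in the definition of an additive realization gives
$$\mathfrak{s}(\delta_1\oplus\delta_2)=\mathfrak{s}(\delta_1)\oplus\mathfrak{s}(\delta_2)=[\triangle_1]\oplus[\triangle_2]=[\bigtriangleup],$$
which says exactly that $\bigtriangleup$ is an $\mathbb{E}$-triangle for $\delta_1\oplus\delta_2$. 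Hence the whole content of the lemma lies in the ``only if'' direction.

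For the converse, I would first observe that $\mathfrak{s}(\delta_1)$ and $\mathfrak{s}(\delta_2)$ are defined in any case; write $\mathfrak{s}(\delta_1)=[A\stackrel{m}{\longrightarrow}M\stackrel{n}{\longrightarrow}C]=:[\triangle_1^0]$ and $\mathfrak{s}(\delta_2)=[A'\stackrel{m'}{\longrightarrow}M'\stackrel{n'}{\longrightarrow}C']=:[\triangle_2^0]$, where $\triangle_1^0,\triangle_2^0$ are by definition $\mathbb{E}$-triangles realizing $\delta_1,\delta_2$. By additivity again, $\mathfrak{s}(\delta_1\oplus\delta_2)=[\triangle_1^0\oplus\triangle_2^0]$. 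Since by hypothesis $\bigtriangleup$ also realizes $\delta_1\oplus\delta_2$, these two classes coincide, so there is an isomorphism $b\colon B\oplus B'\to M\oplus M'$ fitting into a commutative ladder with identity maps on the outer terms; that is, $b(x\oplus x')=m\oplus m'$ and $(n\oplus n')b=y\oplus y'$. Writing $b$ in block form $b=\left(\begin{smallmatrix}b_{11}&b_{12}\\ b_{21}&b_{22}\end{smallmatrix}\right)$ and comparing block entries, I read off in particular $b_{11}x=m$, $nb_{11}=y$, $b_{22}x'=m'$, $n'b_{22}=y'$; thus $(\id_A,b_{11},\id_C)$ is a morphism of sequences $\triangle_1\to\triangle_1^0$ and $(\id_{A'},b_{22},\id_{C'})$ one $\triangle_2\to\triangle_2^0$.

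The crucial point is to upgrade $b_{11},b_{22}$ to isomorphisms. The off-diagonal entries of $b$ block a direct argument, so instead I assemble the block-diagonal map $d:=\mathrm{diag}(b_{11},b_{22})\colon B\oplus B'\to M\oplus M'$. The relations above give $d(x\oplus x')=m\oplus m'$ and $(n\oplus n')d=y\oplus y'$, so $(\id_{A\oplus A'},d,\id_{C\oplus C'})$ is a morphism of $\mathbb{E}$-triangles from $\bigtriangleup$ to $\triangle_1^0\oplus\triangle_2^0$, both of which realize $\delta_1\oplus\delta_2$. Applying the ``short five lemma'' for $\mathbb{E}$-triangles --- namely that a morphism of $\mathbb{E}$-triangles which is an isomorphism on the two outer terms is an isomorphism on the middle term, which I would deduce from the exact sequences of the preceding Proposition by the usual five lemma on representable functors --- to the identity outer maps, I conclude that $d$ is an isomorphism. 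A block-diagonal morphism is invertible only if each diagonal block is invertible, so $b_{11}$ and $b_{22}$ are isomorphisms. Therefore $(\id_A,b_{11},\id_C)\colon\triangle_1\to\triangle_1^0$ and $(\id_{A'},b_{22},\id_{C'})\colon\triangle_2\to\triangle_2^0$ are equivalences of sequences, whence $[\triangle_1]=\mathfrak{s}(\delta_1)$ and $[\triangle_2]=\mathfrak{s}(\delta_2)$; that is, $\triangle_1$ and $\triangle_2$ are the desired $\mathbb{E}$-triangles.

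The main obstacle is exactly this last maneuver: using the diagonal block $b_{11}$ of the equivalence naively fails, because $b$ need not be block-diagonal and $b_{11}$ alone has no reason to be invertible. Repackaging the two diagonal blocks into a single morphism of the large $\mathbb{E}$-triangle $\bigtriangleup$ and invoking the five lemma there is what cancels the off-diagonal interference; it also explains why the statement needs no idempotent-completeness hypothesis, since no splitting of an idempotent is ever required. I would finally double-check that the short five lemma is genuinely available in the pre-extriangulated setting, but it uses only (ET1)--(ET3) through the exact sequences recalled above, so this is unproblematic.
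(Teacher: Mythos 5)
Your proof is correct and takes essentially the same route as the paper: both arguments reduce to a block-diagonal morphism of $\mathbb{E}$-triangles over identity end-maps between two realizations of $\delta_{1}\oplus\delta_{2}$, invoke the short five lemma (\cite[Corollary 3.6]{Na}, which indeed only needs (ET1)--(ET3)) to see that this middle map is an isomorphism, and then observe that a block-diagonal isomorphism has invertible diagonal blocks. The only difference is cosmetic: the paper produces the diagonal components $b_{1},b_{2}$ by realizing the projection morphisms $((\id_{A}~0),(\id_{C}~0))\colon\delta_{1}\oplus\delta_{2}\rightarrow\delta_{i}$ one at a time, whereas you extract them as the diagonal blocks of the single equivalence isomorphism coming from $[\bigtriangleup]=\mathfrak{s}(\delta_{1}\oplus\delta_{2})$.
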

\begin{proof}
Suppose $\bigtriangleup_{1}$ and $\bigtriangleup_{2}$ are $\mathbb{E}$-triangles. Then $\bigtriangleup$ is an $\mathbb{E}$-triangle since $\mathfrak{s}(\delta_{1})\oplus\mathfrak{s}(\delta_{2})=\mathfrak{s}(\delta_{1}\oplus\delta_{2})$. Conversely, suppose that the direct sum of $\triangle_{1}$ and $\triangle_{2}$ is an $\mathbb{E}$-triangle for $\delta_{1}\oplus\delta_{2}\in\mathbb{E}(C\oplus C',A\oplus A')$. Then we have respectively two $\mathbb{E}$-triangles for $\delta_{1}$ and $\delta_{2}$:

\begin{equation}\label{E1}A\stackrel{u}{\longrightarrow}D\stackrel{v}{\longrightarrow}C\stackrel{\delta_{1}}\dashrightarrow\end{equation}and
\begin{equation}\label{E2}A'\stackrel{u'}{\longrightarrow}E\stackrel{v'}{\longrightarrow}C'\stackrel{\delta_{2}}\dashrightarrow.
\end{equation}
It follows that we have a  commutative diagram with rows being $\mathbb{E}$-triangles:

$$
\xymatrix{
  A\oplus A'\ar[d]_-{(\id_{A}~0)} \ar[r]^-{\tiny\begin{pmatrix} x&0\\0&x' \end{pmatrix}} & B\oplus B' \ar[d]_-{(b_{1}~{b_{1}}')} \ar[r]^-{\tiny\begin{pmatrix} y&0 \\0&y' \end{pmatrix}} & C\oplus C' \ar[d]_-{(\id_{C}~0)} \ar@{-->}[r]^-{\tiny\begin{pmatrix} \delta_{1}&0 \\0&\delta_{2} \end{pmatrix}=\delta} &  \\
  A\ar[r]^-{u} & D \ar[r]^-{v} & C \ar@{-->}[r]^-{\delta_{1}} &  }$$
where the middle morphism exists since $(\id_{A},0)_{\ast}\delta=(\id_{C},0)^{\ast}\delta_{1}$. Then we have a morphism $b_{1}:B\rightarrow D$ such that $b_{1}x=u$ and $y=vb_{1}$. Similarly, we also have a morphism $b_{2}: B'\rightarrow E$ such that $b_{2}x'=u'$ and $y'=v'b_{2}$. Hence, we have the following commutative diagram of $\mathbb{E}$-triangles in $\mathscr{C}$
$$
\xymatrix{
  A\oplus A'\ar@{=}[d] \ar[r]^-{\tiny\begin{pmatrix} x&0\\0&x' \end{pmatrix}} & B\oplus B' \ar[d]^-{\tiny\begin{pmatrix} b_{1}&0\\0&b_{2} \end{pmatrix}} \ar[r]^-{\tiny\begin{pmatrix} y&0 \\0&y' \end{pmatrix}} & C\oplus C' \ar@{=}[d] \ar@{-->}[r]^-{\delta} &  \\
  A\oplus A'\ar[r]_-{\tiny\begin{pmatrix} u&0\\0&u' \end{pmatrix}} & B\oplus B'\ar[r]_-{\tiny\begin{pmatrix} v&0 \\0&v' \end{pmatrix}} &C\oplus C' \ar@{-->}[r]^-{\delta} &  .}$$
This implies that $b_{1}$ and $b_{2}$ are isomorphisms by \cite[Corollary 3.6]{Na}. Then, $\triangle_{1}$ and $\triangle_{2}$ are isomorphic to the $\mathbb{E}$-triangles $(\ref{E1})$ and $(\ref{E2})$, respectively. This finishes the proof.
\end{proof}

\section{Main results}

\begin{theorem}\label{main} The idempotent completion of an extriangulated category has a natural structure of an extriangulated category.
\end{theorem}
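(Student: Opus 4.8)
The plan is to transport the extriangulated structure of $(\mathscr{C},\mathbb{E},\mathfrak{s})$ to $\widetilde{\mathscr{C}}$ along the fully faithful embedding $\iota$, the essential point being to construct the biadditive functor and its additive realization by hand and then to descend each axiom from $\mathscr{C}$. Since $\mathbb{E}$ is biadditive, for any objects $(A,e_a),(C,e_c)\in\widetilde{\mathscr{C}}$ the endomorphisms $(e_a)_\ast$ and $(e_c)^\ast$ of the abelian group $\mathbb{E}(C,A)$ are commuting idempotents, and I would set
$$\widetilde{\mathbb{E}}\big((C,e_c),(A,e_a)\big)=\im\big((e_a)_\ast(e_c)^\ast\big)=\{\delta\in\mathbb{E}(C,A)\mid (e_a)_\ast\delta=\delta=(e_c)^\ast\delta\}.$$
A short computation using $\alpha e_a=e_{a'}\alpha=\alpha$ and $\gamma e_{c'}=e_c\gamma=\gamma$ shows that $\delta\mapsto\alpha_\ast\gamma^\ast\delta$ lands in the correct subgroup, so $\widetilde{\mathbb{E}}\colon\widetilde{\mathscr{C}}^{op}\times\widetilde{\mathscr{C}}\to Ab$ is a biadditive functor; this gives (ET1).

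The realization is the step I expect to require the most care. Given $\delta\in\widetilde{\mathbb{E}}((C,e_c),(A,e_a))$, realize it in $\mathscr{C}$ as $\mathfrak{s}(\delta)=[A\stackrel{x}{\longrightarrow}B\stackrel{y}{\longrightarrow}C]$. The defining relations $(e_a)_\ast\delta=\delta=(e_c)^\ast\delta$ say precisely that $(e_a,e_c)\colon\delta\to\delta$ is a morphism of $\mathbb{E}$-extensions, so the realization axiom produces $b\colon B\to B$ with $bx=xe_a$ and $yb=e_cy$; since two of $(e_a,b,e_c)$ are idempotent, Lemma \ref{first} replaces $b$ by an idempotent $e_b$ with the same commutativities. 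I then put $\tilde{x}=e_bx=xe_a$ and $\tilde{y}=e_cy=ye_b$, which are morphisms in $\widetilde{\mathscr{C}}$, and define $\widetilde{\mathfrak{s}}(\delta)=[(A,e_a)\stackrel{\tilde{x}}{\longrightarrow}(B,e_b)\stackrel{\tilde{y}}{\longrightarrow}(C,e_c)]$. To prove this is well defined and additive I would exploit the complementary idempotents $\bar{e}_a=\id_A-e_a$, etc.: since $e_bx=xe_a$ the maps $x,y$ become block diagonal with respect to the splittings $\iota(A)\cong(A,e_a)\oplus(A,\bar{e}_a)$ of $A,B,C$, and because $(\bar{e}_a)_\ast\delta=0=(\bar{e}_c)^\ast\delta$, Lemma \ref{second} identifies $\iota\mathfrak{s}(\delta)$ with $\widetilde{\mathfrak{s}}(\delta)$ in the $(e_a,e_c)$-corner, direct summed with split conflations in the other corners. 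As $\mathfrak{s}(\delta)$ is a fixed $\mathscr{C}$-equivalence class, this forces $\widetilde{\mathfrak{s}}$ to be independent of all auxiliary choices, and conditions (a),(b) of an additive realization follow from $\mathfrak{s}(0)=0$ together with the additivity of $\mathfrak{s}$ and Lemma \ref{second}, giving (ET2).

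For (ET3) and its dual, a commutative square in $\widetilde{\mathscr{C}}$ is in particular one in $\mathscr{C}$, so I apply (ET3) in $\mathscr{C}$ to obtain a morphism $(a,c)\colon\delta\to\delta'$ realized by some $(a,b_0,c)$, and then replace $b_0$ by $e_{b'}b_0e_b$ to make it a morphism of $\widetilde{\mathscr{C}}$; commutativity survives because the outer maps $a,c$ already absorb the relevant idempotents. This establishes that $(\widetilde{\mathscr{C}},\widetilde{\mathbb{E}},\widetilde{\mathfrak{s}})$ is pre-extriangulated.

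The remaining and, I expect, hardest step is (ET4) (and dually (ET4)$^{op}$). Starting from two $\widetilde{\mathbb{E}}$-triangles, I realize them in $\mathscr{C}$, invoke (ET4) there to obtain the object $E$, the morphisms $d,e,h,h'$, and the extension $\delta''$ satisfying compatibilities (i)--(iii), and then equip $E$ with a compatible idempotent $e_E$, obtained by applying Lemma \ref{first} to the conflation $A\stackrel{h}{\longrightarrow}C\stackrel{h'}{\longrightarrow}E$ with the idempotents induced on $A$ and $C$, so that the whole diagram lives in $\widetilde{\mathscr{C}}$ and $\delta''\in\widetilde{\mathbb{E}}((E,e_E),(A,e_a))$. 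The main obstacle is verifying that the identities (i)--(iii), which hold in $\mathbb{E}$, descend to the restricted functor $\widetilde{\mathbb{E}}$; this should follow because every map in the diagram is, by construction, compatible with the chosen idempotents, so applying the projections $(e_\bullet)_\ast(e_\bullet)^\ast$ throughout leaves the three identities intact. Dualizing yields (ET4)$^{op}$, which completes the verification that $\widetilde{\mathscr{C}}$ is extriangulated.
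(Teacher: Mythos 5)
Your Steps 1--3 coincide with the paper's: the same subgroup $\{\delta\mid (e_a)_\ast\delta=\delta=(e_c)^\ast\delta\}$ defines the biadditive functor, the realization is built exactly as you describe (complete $(e_a,e_c)$ to a morphism of $\mathbb{E}$-triangles, replace the middle map by an idempotent via Lemma \ref{first}, cut down to the corner), and (ET3) is obtained by conjugating a $\mathscr{C}$-level lift with the idempotents. That part is sound and matches the paper, modulo the usual terseness about independence of choices (which the paper shares).

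The genuine gap is in (ET4). You propose to ``realize the two $\widetilde{\mathbb{E}}$-triangles in $\mathscr{C}$ and invoke (ET4) there,'' but the given conflations $(A,e_a)\rightarrow(B,e_b)\rightarrow(D,e_d)$ and $(B,e_b)\rightarrow(C,e_c)\rightarrow(F,e_f)$ live in $\widetilde{\mathscr{C}}$: their terms are not objects of $\mathscr{C}$, so (ET4) of $\mathscr{C}$ cannot be applied to them. By construction of the realization, each is only a direct summand of an $\mathbb{E}$-triangle of $\mathscr{C}$, and to compose the two you must arrange that the middle term of the ambient conflation completing the first triangle is \emph{literally the same object of $\mathscr{C}$} as the first term of the ambient conflation completing the second. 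This is exactly what the paper's auxiliary objects $\widetilde{X},\widetilde{Y},\widetilde{Z}$ and the direct sums with split triangles accomplish, and that matching --- together with the subsequent bookkeeping --- is the bulk of the paper's proof; your sketch contains no mechanism for it. A second, related omission: even once the $\mathscr{C}$-level (ET4) diagram exists and an idempotent on $E$ has been produced by Lemma \ref{first}, it is not automatic that the resulting corner sequence $\widetilde{A}\rightarrow\widetilde{C}\rightarrow\widetilde{K}$ is a conflation for $\widetilde{\mathfrak{s}}$, i.e.\ that its equivalence class is the one the realization assigns to the relevant restricted extension. The paper needs Lemma \ref{9} and an explicit chain of split direct summands to verify this, plus a page of matrix identities to check that the compatibilities (i)--(iii) survive restriction; ``every map is compatible with the chosen idempotents'' does not by itself deliver either claim.
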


Before proving Theorem \ref{main}, we need some preparations.

$\mathbf{(Step~1)}$ First of all, we are going to construct a biadditive functor $$\mathbb{F}: \widetilde{\mathscr{C}}^{op}\times\widetilde{\mathscr{C}}\rightarrow Ab.$$ For any objects $\widetilde{C}=(C,e_{c})$ and $\widetilde{A}=(A,e_{a})$ in $\widetilde{\mathscr{C}}$ we define
$$\mathbb{F}(\widetilde{C},\widetilde{A})=\{\widetilde{\omega}=(\omega,e_{c},e_{a})\mid\omega\in\mathbb{E}(C,A)~\text{satisfies}~ e_{c}^{\ast}\omega=\omega=({e_{a}})_{\ast}\omega\}.$$
For two elements $\widetilde{\omega_{1}},\widetilde{\omega_{2}}\in\mathbb{F}(\widetilde{C},\widetilde{A})$, define \begin{equation}\label{jiafa}\widetilde{\omega_{1}}+\widetilde{\omega_{2}}=\widetilde{\omega_{1}+\omega_{2}}=(\omega_{1}+\omega_{2},e_{c},e_{a}).\end{equation}
In fact, $e_{c}^{\ast}(\omega_{1}+\omega_{2})=e_{c}^{\ast}\omega_{1}+e_{c}^{\ast}\omega_{2}=\omega_{1}+\omega_{2}$ and $(e_{a})_{\ast}(\omega_{1}+\omega_{2})=\omega_{1}+\omega_{2}$ since $\mathbb{E}$ is a biadditive functor. So the addition $(\ref{jiafa})$ is well-defined. Thus, we have an abelian group $\mathbb{F}(\widetilde{C},\widetilde{A})$.

For $\widetilde{A'}=(A',e_{a'})$ we construct a group homomorphism between $\mathbb{F}(\widetilde{C},\widetilde{A})$ and $\mathbb{F}(\widetilde{C},\widetilde{A'})$.  For any $\alpha\in\widetilde{\mathscr{C}}(\widetilde{A},\widetilde{A'})$ and $\widetilde{\omega}\in\mathbb{F}(\widetilde{C},\widetilde{A})$, we define a homomorphism $\alpha_{\bullet}$ by
$$\xymatrix{
  \mathbb{F}(\widetilde{C},\widetilde{A})\ar[d]_{\alpha_{\bullet}}  & \widetilde{\omega}=(\omega,e_{c},e_{a}) \ar@{|->}[d] \\
  \mathbb{F}(\widetilde{C},\widetilde{A'})  & \alpha_{\bullet}\widetilde{\omega}=(\alpha_{\ast}\omega,e_{c},e_{a'}).   }$$
Indeed, since $\alpha_{\ast}\omega=\alpha_{\ast}{e_{c}}^{\ast}\omega={e_{c}}^{\ast}\alpha_{\ast}\omega$ and $\alpha_{\ast}\omega=\alpha_{\ast}{(e_{a})}_{\ast}\omega=(\alpha e_{a})_{\ast}\omega=(e_{a}'\alpha)_{\ast}\omega={(e_{a}')}_{\ast}\alpha_{\ast}\omega$, we obtain that $\alpha_{\bullet}\widetilde{\omega}$ really belongs to $\mathbb{F}(\widetilde{C},\widetilde{A'})$. Similarly, we can define $\beta^{\bullet}$ such that $\beta^{\bullet}\widetilde{\omega}\in\mathbb{F}(\widetilde{C'},\widetilde{A})$ for $\beta\in\widetilde{\mathscr{C}}(\widetilde{C'},\widetilde{C}).$

On the one hand, if there is another homomorphism $\alpha'\in\widetilde{\mathscr{C}}(\widetilde{A},\widetilde{A'})$. It is easy to see
\begin{equation}
(\alpha+\alpha')_{\bullet}(\widetilde{\omega})=((\alpha+\alpha')_{\ast}\omega,e_{c},e_{a}')\\
=(\alpha_{\ast}\omega+\alpha'_{\ast}\omega,e_{c},e_{a}')=\alpha_{\bullet}\widetilde{\omega}+\alpha'_{\bullet}\widetilde{\omega}
\end{equation}
and so is $(\beta+\beta')^{\bullet}(\widetilde{\omega})=\beta^{\bullet}\widetilde{\omega}+\beta'^{{\bullet}}\widetilde{\omega}$ for $\beta'\in\widetilde{\mathscr{C}}(\widetilde{C'},\widetilde{C}).$

On the other hand, for any morphism $m:\widetilde{A}\rightarrow \widetilde{A'}$ and $n:\widetilde{C'}\rightarrow \widetilde{C}$. We have the following commutative diagram which follows from the fact that $\mathbb{E}$ is biadditive.
$$\xymatrix{
  \mathbb{F}(\widetilde{C},\widetilde{A})\ar[d]_-{n^{\bullet}} \ar[r]^-{m_{\bullet}} & \mathbb{F}(\widetilde{C},\widetilde{A'})\ar[d]^-{n^{\bullet}} \\
  \mathbb{F}(\widetilde{C'},\widetilde{A}) \ar[r]^-{m_{\bullet}}& \mathbb{F}(\widetilde{C'},\widetilde{A'})  }
$$
From what has been discussed above, we have the following

\begin{lemma}\label{1}
Suppose $\mathscr{C}$ is equipped with a biadditive functor $\mathbb{E}: \mathscr{C}^{op}\times\mathscr{C}\rightarrow Ab$. Then we have a biadditive functor $\mathbb{F}$ : $\widetilde{\mathscr{C}}^{op}\times\widetilde{\mathscr{C}}\rightarrow Ab.$
\end{lemma}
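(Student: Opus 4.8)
The construction preceding the statement has already supplied almost all of the data: each $\mathbb{F}(\widetilde{C},\widetilde{A})$ is an abelian group under (\ref{jiafa}), the covariant action $\alpha_{\bullet}$ and the contravariant action $\beta^{\bullet}$ are well-defined and additive in the morphism variable, and they commute by the square displayed above. The plan is therefore to promote these two partial actions into a single bifunctor and to record the remaining functoriality axioms. Concretely, I would define $\mathbb{F}$ on a morphism $(\beta,\alpha)$ of $\widetilde{\mathscr{C}}^{op}\times\widetilde{\mathscr{C}}$ by $\mathbb{F}(\beta,\alpha)=\alpha_{\bullet}\beta^{\bullet}=\beta^{\bullet}\alpha_{\bullet}$, the two composites agreeing precisely because of that commutative square.

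First I would check that each $\alpha_{\bullet}$ (and dually each $\beta^{\bullet}$) is a homomorphism of abelian groups, which is immediate from the biadditivity of $\mathbb{E}$ via $\alpha_{\ast}(\omega_{1}+\omega_{2})=\alpha_{\ast}\omega_{1}+\alpha_{\ast}\omega_{2}$. Together with the already-verified relations $(\alpha+\alpha')_{\bullet}=\alpha_{\bullet}+\alpha'_{\bullet}$ and $(\beta+\beta')^{\bullet}=\beta^{\bullet}+\beta'^{\bullet}$, this yields additivity of $\mathbb{F}$ in each variable. Next I would verify preservation of composition; for $\alpha\in\widetilde{\mathscr{C}}(\widetilde{A},\widetilde{A'})$ and $\alpha'\in\widetilde{\mathscr{C}}(\widetilde{A'},\widetilde{A''})$ this reduces to $(\alpha'\alpha)_{\ast}=\alpha'_{\ast}\alpha_{\ast}$, which is just functoriality of $\mathbb{E}$ in its second argument, and the contravariant case is dual.

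The one step that I expect to be the genuine (if mild) obstacle is preservation of identities, because the identity morphism of an object $\widetilde{A}=(A,e_{a})$ in the idempotent completion is the idempotent $e_{a}$ itself rather than $\id_{A}$. I must therefore show that $(e_{a})_{\bullet}$ acts as the identity on $\mathbb{F}(\widetilde{C},\widetilde{A})$, and this is exactly where the defining constraints of $\mathbb{F}$ do the work: since every $\widetilde{\omega}$ satisfies $(e_{a})_{\ast}\omega=\omega$, one gets $(e_{a})_{\bullet}\widetilde{\omega}=((e_{a})_{\ast}\omega,e_{c},e_{a})=\widetilde{\omega}$, and dually $e_{c}^{\ast}\omega=\omega$ gives $(e_{c})^{\bullet}=\id$. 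Once this is in place, the assignment $\mathbb{F}(\beta,\alpha)=\alpha_{\bullet}\beta^{\bullet}$ is a functor $\widetilde{\mathscr{C}}^{op}\times\widetilde{\mathscr{C}}\to Ab$ that is additive in each variable, i.e. a biadditive functor, which completes the proof. The whole verification is routine; the only thing to keep in mind is that the relations $e_{c}^{\ast}\omega=\omega=(e_{a})_{\ast}\omega$ built into $\mathbb{F}(\widetilde{C},\widetilde{A})$ are precisely what make the chosen idempotents act as identities.
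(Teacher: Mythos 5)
Your proposal is correct and follows essentially the same route as the paper, which likewise builds $\mathbb{F}$ from the abelian groups $\mathbb{F}(\widetilde{C},\widetilde{A})$ together with the actions $\alpha_{\bullet}$ and $\beta^{\bullet}$ and the commuting square. You are in fact slightly more careful than the paper in spelling out that the identity of $(A,e_{a})$ is the idempotent $e_{a}$ itself and that $(e_{a})_{\bullet}$ acts as the identity precisely because of the defining relation $(e_{a})_{\ast}\omega=\omega$; the paper leaves this (routine) verification implicit.
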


For any pair of objects $\widetilde{A}$, $\widetilde{B}\in\widetilde{\mathscr{C}}$, an element $\widetilde{\omega}\in\mathbb{F}(\widetilde{C},\widetilde{A})$ is called an $\mathbb{F}$-{\em extension}. A morphism $(\alpha,\beta)$: $\widetilde{\omega}\rightarrow\widetilde{\omega'}$ is a pair of morphisms $\alpha\in\widetilde{\mathscr{C} }(\widetilde{A},\widetilde{A'})$ and $\beta\in\widetilde{\mathscr{C}}(\widetilde{C},\widetilde{C'})$ such that $\alpha_{\bullet}\widetilde{\omega}=\beta^{\bullet}\widetilde{\omega'}$. We obtain the category $\mathbb{F}$-Ext$(\widetilde{\mathscr{C}})$ of $\mathbb{F}$-extensions. For any $\mathbb{E}$-extension $\delta\in\mathbb{E}(C,A)$, the assignment
$$\delta\mapsto\widetilde{\delta}=(\delta,\id_{C},\id_{A})$$
defines a fully faithful functor $\tilde{i}$ from $\mathbb{E}$-Ext$(\mathscr{C})$ to $\mathbb{F}$-Ext$(\widetilde{\mathscr{C}})$. In addition, for any $\gamma\in\mathscr{C}(A,A')$, $\gamma_{\bullet}\widetilde{\delta}=(\gamma_{\ast}\delta,\id_{C},\id_{A})=\gamma_{\ast}\delta$. Similarly, for any $\theta\in\mathscr{C}(C',C)$, we have $\theta^{\bullet}\widetilde{\delta}=\theta^{\ast}\delta$. Namely, $\mathbb{F}\mid_{\mathscr{C}^{op}\times\mathscr{C}}=\mathbb{E}$.\\

$\mathbf{(Step~2)}$ Secondly, we need to construct an additive realization $\mathfrak{c}$ for $\mathbb{F}$. For any $\mathbb{E}$-extension $\delta\in\mathbb{E}(C,A)$, there is a correspondence which associates an equivalence class $\mathfrak{s}(\delta)=[A\stackrel{x}{\longrightarrow}B\stackrel{y}{\longrightarrow}C]$. If $\widetilde{A}=(A,e_{a})\in\widetilde{\mathscr{C}}$, take $\widetilde{A'}=(A,\id_{A}$$-e_{a})$. It is easy to check that $\widetilde{A}\oplus\widetilde{A'}\cong(A,\id_{A})$. Similarly, for $\widetilde{C}=(C,e_{c})$, take $\widetilde{C'}=(C,\id_C-e_{c})$, we have $\widetilde{C}\oplus\widetilde{C'}\cong(C,\id_{C})$. For any $\widetilde{\omega}\in\mathbb{F}(\widetilde{C},\widetilde{A})$, since $\footnotesize\begin{pmatrix} \widetilde{\omega}&0\\0&0\end{pmatrix}$ belongs to $\mathbb{F}(\widetilde{C}\oplus \widetilde{C'},\widetilde{A}\oplus \widetilde{A'})\cong\mathbb{E}(C,A)$, there exists an $\mathbb{E}$-triangle in $\mathscr{C}$

$$\Delta:~~~~~~~~~~\widetilde{A}\oplus \widetilde{A'}\stackrel{\alpha}{\longrightarrow}B\stackrel{\beta}{\longrightarrow}\widetilde{C}\oplus \widetilde{C'}\stackrel{\tiny\begin{pmatrix} \widetilde{\omega}&0\\0&0 \end{pmatrix}}\dashrightarrow.$$
By Lemma \ref{first}, there exists an idempotent morphism $b:B\rightarrow B$ such that the following is the morphism of $\mathbb{E}$-triangles
$$
\xymatrix{
  \widetilde{A}\oplus \widetilde{A'}\ar[d]_-{ \tiny\begin{pmatrix}e_{a}&0\\0&0 \end{pmatrix}} \ar[r]^-{\alpha} & B\ar[d]_-{b} \ar[r]^-{\beta~~} & \widetilde{C}\oplus \widetilde{C'}\ar@{-->}[r]^-{\tiny\begin{pmatrix} \widetilde{\omega}&0\\0&0 \end{pmatrix}} \ar[d]_-{ \tiny\begin{pmatrix} e_{c}&0\\0&0 \end{pmatrix}}&  \\
  \widetilde{A}\oplus\widetilde{ A'} \ar[r]^-{\alpha} & B \ar[r]^-{\beta~~} & \widetilde{C}\oplus \widetilde{C'}\ar@{-->}[r]^-{\tiny\begin{pmatrix} \widetilde{\omega}&0\\0&0 \end{pmatrix}} &  .}
$$
Consider the sequence $\Delta_{1}:\widetilde{A}\stackrel{b\alpha}{\longrightarrow}(B,b)\stackrel{\beta b}{\longrightarrow}\widetilde{C}$ in $\widetilde{\mathscr{C}}$, we have the canonical inclusions

\begin{equation}\label{remark}
\xymatrix{
 \widetilde{A} \ar@{^(->}[d] \ar[r]^-{b\alpha} & (B,b) \ar@{^(->}[d] \ar[r]^-{\beta b} & \widetilde{C} \ar@{^(->}[d] & \\
 \widetilde{A}\oplus \widetilde{A'} \ar[r]^-{\alpha} & (B, \id_{B}) \ar[r]^-{\beta} & \widetilde{C}\oplus \widetilde{C'}& }
\end{equation}
and $(\id_{\widetilde{A}},0)_{\bullet}\widetilde{\omega}=(\id_{\widetilde{C}},0)^{\bullet}\tiny\begin{pmatrix} \widetilde{\omega}&0\\0&0 \end{pmatrix}$. We set $\mathfrak{c}(\widetilde{0})=\widetilde{0}$ and $\mathfrak{c}(\widetilde{\omega})=[\Delta_1]$. Then $\mathfrak{c}(\widetilde{\omega}\oplus\widetilde{\omega'})=\mathfrak{c}(\widetilde{\omega})\oplus{\mathfrak{c}}(\widetilde{\omega}')$. Thus $\mathfrak{c}$ is a correspondence which associates to $\widetilde{\omega}\in\mathbb{F}(\widetilde{C},\widetilde{A})$ an equivalence class $[\widetilde{A}\stackrel{x}{\longrightarrow}\widetilde{B}\stackrel{y}{\longrightarrow}\widetilde{C}]$ in $\widetilde{\mathscr{C}}$. In this case, we call the sequence an {\em $\mathbb{F}$-triangle} and write it in the following way
$$\Delta_{2}:\widetilde{A}\stackrel{x}{\longrightarrow}\widetilde{B}\stackrel{y}{\longrightarrow}\widetilde{C}\stackrel{\widetilde{\omega}}\dashrightarrow.$$

Let $\Delta_{2}$, $\Delta$ be as above, then we have a sequence of maps $\Delta_{2}\stackrel{i}{\longrightarrow}\Delta\stackrel{\pi}{\longrightarrow}\Delta_{2}$ such that $\pi i=\id_{\Delta(2)}$ and $\pi $ is also a morphism of $\mathbb{F}$-triangles from $\tiny\begin{pmatrix} \widetilde{\omega}&0\\0&0 \end{pmatrix}$ to $\widetilde{\omega}$.

Let $\widetilde{\omega}\in\mathbb{F}(\widetilde{C},\widetilde{A})$ and $\widetilde{\omega'}\in\mathbb{F}(\widetilde{C'},\widetilde{A'})$ be any pair of $\mathbb{F}$-extensions, with

$$\Delta(1):~\widetilde{A}\stackrel{x}{\longrightarrow}\widetilde{B}\stackrel{y}{\longrightarrow}\widetilde{C}\stackrel{\widetilde{\omega}}\dashrightarrow$$
and
$$\Delta(2):~\widetilde{A'}\stackrel{x'}{\longrightarrow}\widetilde{B'}\stackrel{y'}{\longrightarrow}\widetilde{C'}\stackrel{\widetilde{\omega}}\dashrightarrow.$$
For any morphism $(a,c):\widetilde{\omega}\rightarrow\widetilde{\omega'}$ in $\mathbb{F}$-Ext$(\widetilde{\mathscr{C}})$, we have a sequence of maps $\Delta(1)\stackrel{i}{\longrightarrow}\Delta\stackrel{\pi}{\longrightarrow}\Delta(1)$ and $\Delta(2)\stackrel{i'}{\longrightarrow}\nabla\stackrel{\pi'}{\longrightarrow}\Delta(2)$ such that $\pi i=\id_{\Delta(1)}$ and $\pi' i'=\id_{\Delta(2)}$, where $\Delta$ and $\nabla$ are $\mathbb{E}$-triangles for $\delta$ and $\delta'$, respectively. The morphism $(a,c)$ induces a morphism $(i_{a'}a\pi_{a},i_{c'}c\pi_{c}):\delta\rightarrow\delta'$. We can apply $\rm (ET3)$ to extend the map $(i_{a'}a\pi_{a},i_{c'}c\pi_{c})$ to a morphism of  $\mathbb{E}$-triangles $\alpha:\Delta\rightarrow\nabla$. Then $\pi'\alpha i$ is a morphism from $ \Delta(1)$ to $\Delta(2)$. Thus $\mathfrak{c}$ is an additive realization of $\mathbb{F}$. In conclusion, we have the following

\begin{lemma}\label{2}
Let $\widetilde{\mathscr{C}},\mathbb{F},\mathfrak{c}$ be as above. Then $\mathfrak{c}$ is an additive realization for $\mathbb{F}$.
\end{lemma}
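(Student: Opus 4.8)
The plan is to verify the three defining conditions of an additive realization — the two additivity axioms and the morphism‑realization property — for the correspondence $\mathfrak{c}$ built in Step 2. The additivity axioms $\mathfrak{c}(\widetilde{0}) = \widetilde{0}$ and $\mathfrak{c}(\widetilde{\omega}\oplus\widetilde{\omega'}) = \mathfrak{c}(\widetilde{\omega})\oplus\mathfrak{c}(\widetilde{\omega'})$ were already recorded during that construction: the split $\mathbb{F}$‑extension is sent to the split sequence by convention, and the ambient $\mathbb{E}$‑triangle and idempotent attached to $\widetilde{\omega}\oplus\widetilde{\omega'}$ may be taken as the direct sums of those attached to $\widetilde{\omega}$ and $\widetilde{\omega'}$, so the claim follows from Lemma \ref{second}. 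I would also note briefly that $[\Delta_1]$ is independent of the auxiliary choices: since $\mathfrak{s}$ is a realization the ambient $\mathbb{E}$‑triangle $\Delta$ is determined up to equivalence, and any two idempotents produced by Lemma \ref{first} give isomorphic middle objects $(B,b)$ in $\widetilde{\mathscr{C}}$, so $\mathfrak{c}$ is well defined.

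The heart of the proof is the realization property. Fix a morphism $(a,c):\widetilde{\omega}\to\widetilde{\omega'}$ of $\mathbb{F}$‑extensions, with $\mathfrak{c}(\widetilde{\omega})=[\Delta(1)]$ and $\mathfrak{c}(\widetilde{\omega'})=[\Delta(2)]$, and let $\Delta(1)\xrightarrow{i}\Delta\xrightarrow{\pi}\Delta(1)$ and $\Delta(2)\xrightarrow{i'}\nabla\xrightarrow{\pi'}\Delta(2)$ be the splittings onto the ambient $\mathbb{E}$‑triangles $\Delta,\nabla$ realizing $\delta$ and $\delta'$. The first step is to lift $(a,c)$ to the ambient level: I would check that the padded pair $(i_{a'}a\pi_{a},\,i_{c'}c\pi_{c})$ is a genuine morphism $\delta\to\delta'$ in $\mathbb{E}\text{-Ext}(\mathscr{C})$, which comes down to translating the defining relation $a_{\bullet}\widetilde{\omega}=c^{\bullet}\widetilde{\omega'}$ through the isomorphism $\mathbb{F}(\widetilde{C}\oplus\widetilde{C'},\widetilde{A}\oplus\widetilde{A'})\cong\mathbb{E}(C,A)$.

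Next I would apply $\rm(ET3)$ to this lifted morphism to produce an ambient middle map $g\colon B\to B'$ completing it to a morphism of $\mathbb{E}$‑triangles from $\Delta$ to $\nabla$. The candidate middle map of $\mathbb{F}$‑triangles is then the restriction $\widetilde{b}:=\pi'gi=b'gb\colon (B,b)\to(B',b')$. That $\widetilde{b}$ is a legitimate morphism in $\widetilde{\mathscr{C}}$ (i.e. $b'\widetilde{b}=\widetilde{b}b=\widetilde{b}$) is immediate from the idempotency of $b,b'$. It then remains to verify that $(a,\widetilde{b},c)$ is a morphism of $\mathbb{F}$‑triangles, that is, $\widetilde{b}\,x=x'\,a$ and $y'\,\widetilde{b}=c\,y$ in $\widetilde{\mathscr{C}}$, where $x=b\alpha$, $y=\beta b$ are the structure maps of $\Delta(1)$ and $x',y'$ those of $\Delta(2)$.

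The main obstacle is precisely this last pair of identities. They should follow by pre‑ and post‑composing the two commuting squares of the ambient morphism $g$ with the idempotents $b,b'$ and with the component inclusions and projections of the decompositions $\widetilde{A}\oplus\widetilde{A'}$ and $\widetilde{C}\oplus\widetilde{C'}$; but the bookkeeping — tracking how $g$ interacts with the block structure and with the idempotents on the middle terms, and ensuring the equalities hold on the nose in $\widetilde{\mathscr{C}}$ rather than merely in $\mathscr{C}$ — is where care is needed and constitutes the technical core of the argument.
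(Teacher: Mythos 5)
Your proposal follows essentially the same route as the paper: pad $\widetilde{\omega}$ and $\widetilde{\omega'}$ to ambient $\mathbb{E}$-extensions via the complementary idempotents, lift $(a,c)$ to $(i_{a'}a\pi_{a},\,i_{c'}c\pi_{c}):\delta\rightarrow\delta'$, realize it by a middle map $g$ using $\rm(ET3)$ together with the realization property of $\mathfrak{s}$, and compress with $\pi'(-)i$, the additivity axioms being built into the construction exactly as you note. The final pair of identities you flag as the ``technical core'' is in fact immediate: $i$ and $\pi'$ are themselves morphisms of $\mathbb{F}$-triangles (the canonical inclusion and projection of diagram (\ref{remark})), so the composite $\pi' g i$ is one too, and its outer components equal $a$ and $c$ because $\pi i=\id_{\Delta(1)}$ and $\pi' i'=\id_{\Delta(2)}$ while $e_{a'}a=a$ and $c\,e_{c}=c$ --- which is precisely the paper's (equally terse) argument.
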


We show that the triplet $(\widetilde{\mathscr{C}},\mathbb{F},\mathfrak{c})$ is also compatible to $\rm (ET3)$.

\begin{proposition}\label{6}
Let $\widetilde{\omega}\in\mathbb{F}(\widetilde{C},\widetilde{A})$ and $\widetilde{\omega'}\in\mathbb{F}(\widetilde{C'},\widetilde{A'})$ be any pair of $\mathbb{F}$-extensions, realized as
$\widetilde{A}\stackrel{x}{\longrightarrow}\widetilde{B}\stackrel{y}{\longrightarrow}\widetilde{C}$ and  $\widetilde{A'}\stackrel{x'}{\longrightarrow}\widetilde{B'}\stackrel{y'}{\longrightarrow}\widetilde{C'}$, respectively. For any commutative square in $\widetilde{\mathscr{C}}$
$$\xymatrix{
 \Delta_{1}\ar[d]_-{(a,b)} &\widetilde{A} \ar[d]_-{a} \ar[r]^-{x} &\widetilde{ B} \ar[d]_-{b} \ar[r]^{y} & \widetilde{C}\ar@{-->}[r]^-{\widetilde{\omega}} &\\
 \Delta_{2}&\widetilde{A'}\ar[r]^-{x'} &\widetilde{B'}\ar[r]^-{y'} & \widetilde{C'}\ar@{-->}[r]^-{\widetilde{\omega'}}&}$$
there exists a morphism $(a,c)$: $\widetilde{\omega}\rightarrow\widetilde{\omega'}$ which is realized by $(a,b,c)$.
\end{proposition}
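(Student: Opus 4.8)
The plan is to deduce $\rm(ET3)$ for $(\widetilde{\mathscr{C}},\mathbb{F},\mathfrak{c})$ from $\rm(ET3)$ for the original category $(\mathscr{C},\mathbb{E},\mathfrak{s})$, exploiting that every $\mathbb{F}$-triangle is a direct summand of an $\mathbb{E}$-triangle. First I would recall the splitting data already produced in the construction of $\mathfrak{c}$: $\mathbb{E}$-triangles $\Delta$ and $\nabla$ in $\mathscr{C}$ realizing $\delta=\left(\begin{smallmatrix}\widetilde{\omega}&0\\0&0\end{smallmatrix}\right)$ and $\delta'=\left(\begin{smallmatrix}\widetilde{\omega'}&0\\0&0\end{smallmatrix}\right)$, together with morphisms of $\mathbb{F}$-triangles $\Delta(1)\stackrel{i}{\longrightarrow}\Delta\stackrel{\pi}{\longrightarrow}\Delta(1)$ and $\Delta(2)\stackrel{i'}{\longrightarrow}\nabla\stackrel{\pi'}{\longrightarrow}\Delta(2)$ with $\pi i=\id_{\Delta(1)}$ and $\pi'i'=\id_{\Delta(2)}$. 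Writing the components as $i=(i_a,i_b,i_c)$, $\pi=(\pi_a,\pi_b,\pi_c)$, $i'=(i_{a'},i_{b'},i_{c'})$, $\pi'=(\pi_{a'},\pi_{b'},\pi_{c'})$, the componentwise identities $\pi_ai_a=\id_{\widetilde{A}}$, $\pi_bi_b=\id_{\widetilde{B}}$, $\pi_ci_c=\id_{\widetilde{C}}$ and their primed analogues will be used throughout. I would also record, from the construction, that $i$ realizes a morphism $\widetilde{\omega}\to\delta$ of $\mathbb{F}$-extensions and, dually, $\pi'$ realizes a morphism $\delta'\to\widetilde{\omega'}$, i.e. $(i_a)_\bullet\widetilde{\omega}=(i_c)^\bullet\delta$ and $(\pi_{a'})_\bullet\delta'=(\pi_{c'})^\bullet\widetilde{\omega'}$.

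Next I would lift the prescribed commutative square to the ambient triangles. Setting $\widehat{a}:=i_{a'}a\pi_a$ and $\widehat{b}:=i_{b'}b\pi_b$, which are morphisms in $\mathscr{C}$ between the source and middle terms of $\Delta$ and $\nabla$, I would check that $(\widehat{a},\widehat{b})$ is again a commutative square relating the left-hand maps $\alpha_\Delta,\alpha_\nabla$ of $\Delta$ and $\nabla$. This is the single computation $\widehat{b}\alpha_\Delta=i_{b'}b\pi_b\alpha_\Delta=i_{b'}bx\pi_a=i_{b'}x'a\pi_a=\alpha_\nabla i_{a'}a\pi_a=\alpha_\nabla\widehat{a}$, which uses the hypothesis $bx=x'a$ together with the intertwining relations $\pi_b\alpha_\Delta=x\pi_a$ and $i_{b'}x'=\alpha_\nabla i_{a'}$ coming from $\pi$ and $i'$ being morphisms of sequences. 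Then $\rm(ET3)$ in $\mathscr{C}$ applies to this square, producing a morphism $\widehat{c}$ on cones such that $\widehat{a}_\ast\delta=\widehat{c}^\ast\delta'$ and $(\widehat{a},\widehat{b},\widehat{c})$ is a morphism of $\mathbb{E}$-triangles.

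Finally I would descend the data by conjugation. Putting $c:=\pi_{c'}\widehat{c}\,i_c$, one has $(a,b,c)=\pi'\circ(\widehat{a},\widehat{b},\widehat{c})\circ i$, since $\pi_{a'}\widehat{a}i_a=\pi_{a'}i_{a'}\,a\,\pi_ai_a=a$ and similarly for the middle term; hence $(a,b,c)$ is a morphism of sequences in $\widetilde{\mathscr{C}}$ realizing the given square. It remains to check that $(a,c)$ is a genuine morphism of $\mathbb{F}$-extensions, $a_\bullet\widetilde{\omega}=c^\bullet\widetilde{\omega'}$. Here I would compute $a_\bullet\widetilde{\omega}=(\pi_{a'})_\bullet(\widehat{a})_\bullet(i_a)_\bullet\widetilde{\omega}$, replace $(i_a)_\bullet\widetilde{\omega}$ by $(i_c)^\bullet\delta$, commute the covariant $\widehat{a}$-action past the contravariant $i_c$-action, insert $\widehat{a}_\ast\delta=\widehat{c}^\ast\delta'$, and finally use $(\pi_{a'})_\bullet\delta'=(\pi_{c'})^\bullet\widetilde{\omega'}$ to arrive at $(\pi_{c'}\widehat{c}\,i_c)^\bullet\widetilde{\omega'}=c^\bullet\widetilde{\omega'}$; all the commutations are instances of the biadditivity of $\mathbb{F}$ established in Lemma \ref{1}.

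I expect the main obstacle to be precisely this last descent step. The realization $\mathfrak{c}$ and the functorial operations $(-)_\bullet,(-)^\bullet$ on $\mathbb{F}$ are defined only through the ambient operations on $\mathbb{E}$ via the chosen splittings, so one must verify that conjugating by the idempotent-respecting inclusions and projections preserves the \emph{extension identity} and not merely the underlying commutative square. Keeping precise track of which component of $i,\pi,i',\pi'$ acts in which variable, and invoking $\pi i=\id$ and $\pi'i'=\id$ at the right moments, is the delicate bookkeeping; conceptually, however, no ingredient beyond $\rm(ET3)$ for $\mathscr{C}$ and the bifunctoriality of $\mathbb{F}$ is required.
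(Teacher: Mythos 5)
Your proposal is correct and follows essentially the same route as the paper's proof: lift $(a,b)$ to the ambient $\mathbb{E}$-triangles $\Delta$ and $\nabla$ via the conjugation $(i_{a'}a\pi_a,\,i_{b'}b\pi_b)$, apply $\rm(ET3)$ in $\mathscr{C}$, and descend by $\pi'\circ(-)\circ i$. The paper states this more tersely, while you additionally spell out the verification that the lifted square commutes and that the extension identity $a_\bullet\widetilde{\omega}=c^\bullet\widetilde{\omega'}$ survives the descent, which are exactly the checks implicit in the paper's argument.
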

\begin{proof}
By the construction of $\mathbb{F}$-triangles, there exists a sequence of maps $\Delta_{1}\stackrel{i}{\longrightarrow}\Delta\stackrel{\pi}{\longrightarrow}\Delta_{1}$ and $\Delta_{2}\stackrel{i'}{\longrightarrow}\nabla\stackrel{\pi'}{\longrightarrow}\Delta_{2}$ such that $\pi i=\id_{\Delta_{1}}$ and $\pi' i'=\id_{\Delta_{2}}$, where $\Delta$ and $\nabla$ are $\mathbb{E}$-triangles in $\mathscr{C}$. The map $(a,b)$ induces a map $(i_{a}'a\pi_{a},i_{b}'b\pi_{b})$ from $\Delta$ to $\nabla$. Using $\rm (ET3)$, we complete the map to a morphism of $\mathbb{E}$-triangles $\varphi$: $\Delta\rightarrow\nabla$. Then $\pi'\varphi i$ is a morphism of $\mathbb{F}$-triangles from $\Delta_{1}$ to $\Delta_{2}$. This finishes the proof.
\end{proof}

We can check that $(\widetilde{\mathscr{C}},\mathbb{F},\mathfrak{c})$ is compatible to $\rm(ET3)^{op}$ in the same way.

\begin{proposition}\label{7}
The idempotent completion of a pre-extriangulated category has a natural structure of a pre-extriangulated category.
\end{proposition}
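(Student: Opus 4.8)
The plan is simply to assemble, from the preparatory results of Steps 1 and 2, the four axioms that constitute a pre-extriangulated structure on the triplet $(\widetilde{\mathscr{C}},\mathbb{F},\mathfrak{c})$. By definition $(\widetilde{\mathscr{C}},\mathbb{F},\mathfrak{c})$ is a pre-extriangulated category exactly when it satisfies $\rm(ET1)$, $\rm(ET2)$, $\rm(ET3)$ and $\rm(ET3)^{op}$, so once $\mathbb{F}$ and $\mathfrak{c}$ are in hand no further construction is required; the proposition is a bookkeeping step that collects the earlier lemmas.

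First I would invoke Lemma \ref{1}, which furnishes the biadditive functor $\mathbb{F}\colon\widetilde{\mathscr{C}}^{op}\times\widetilde{\mathscr{C}}\rightarrow Ab$; this is precisely $\rm(ET1)$. Next, Lemma \ref{2} asserts that the correspondence $\mathfrak{c}$ constructed in Step 2 is an additive realization of $\mathbb{F}$, which is $\rm(ET2)$; here the additivity conditions $\mathfrak{c}(\widetilde{0})=\widetilde{0}$ and $\mathfrak{c}(\widetilde{\omega}\oplus\widetilde{\omega'})=\mathfrak{c}(\widetilde{\omega})\oplus\mathfrak{c}(\widetilde{\omega'})$ rest on the splitting $\widetilde{A}\oplus\widetilde{A'}\cong(A,\id_{A})$ together with Lemma \ref{second}. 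For $\rm(ET3)$ I would cite Proposition \ref{6} verbatim.

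It then remains to record $\rm(ET3)^{op}$, which follows by the dual of Proposition \ref{6}: one repeats its argument with the roles of $\widetilde{A}$ and $\widetilde{C}$ interchanged, replacing the use of $\rm(ET3)$ in $(\mathscr{C},\mathbb{E},\mathfrak{s})$ by $\rm(ET3)^{op}$ and keeping the same section–retraction pairs $\Delta_{1}\stackrel{i}{\longrightarrow}\Delta\stackrel{\pi}{\longrightarrow}\Delta_{1}$ that arise from splitting idempotents. Collecting these four verifications yields the claim. I do not expect a genuine obstacle at this stage: since $\rm(ET4)$ and $\rm(ET4)^{op}$ are explicitly excluded from the definition, the entire difficulty has been front-loaded into constructing $\mathbb{F}$ and the realization $\mathfrak{c}$ and into checking $\rm(ET3)$ in Proposition \ref{6}. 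The only point that needs care is notational—making sure the maps $i,\pi$ obtained from idempotent splittings correctly relate an $\mathbb{F}$-triangle in $\widetilde{\mathscr{C}}$ to the ambient $\mathbb{E}$-triangle in $\mathscr{C}$ that realizes the padded extension $\left(\begin{smallmatrix}\widetilde{\omega}&0\\0&0\end{smallmatrix}\right)$, so that the dual lifting is applied to the correct induced morphism.
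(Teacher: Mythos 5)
Your proposal is correct and matches the paper's own (one-line) proof, which simply cites Lemma \ref{1} for $\rm(ET1)$, Lemma \ref{2} for $\rm(ET2)$, and Proposition \ref{6} for $\rm(ET3)$, with $\rm(ET3)^{op}$ noted separately as following "in the same way." Your additional remarks about where the real work lies (in constructing $\mathbb{F}$, $\mathfrak{c}$, and the section–retraction pairs $i,\pi$) accurately reflect the structure of the paper's Steps 1 and 2.
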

\begin{proof}
It follows from Lemmas \ref{1}, \ref{2} and Proposition \ref{6}.
\end{proof}

Let $(\mathscr{C},\mathbb{E},\mathfrak{s})$ be a pre-extriangulated category. We say an $\mathbb{E}$-triangle is {\em splitting} if it realizes a split $\mathbb{E}$-extension. The $\mathbb{E}$-triangle $A\stackrel{x}{\longrightarrow}B\stackrel{y}{\longrightarrow}C\stackrel{\delta}\dashrightarrow$ sometimes is denoted by the triplet $(x,y,\delta)$.
\begin{lemma}\label{11} Consider the following commutative diagram
$$\xymatrix{
  A \ar[d]_{x} \ar[r]^{u} & B \ar[d]_{y} \ar[r]^{v} & C \ar[d]_{z} \ar@{-->}[r]^{\delta} &  \\
  A' \ar[r]^{a} & B' \ar[r]^{b} & C' &  }$$
where the vertical morphisms are isomorphism. Then $(a,b,x_{\ast}(z^{-1})^{\ast}\delta)$ is an $\mathbb{E}$-triangle.
\end{lemma}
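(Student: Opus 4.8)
The plan is to exhibit the bottom row $A'\stackrel{a}{\longrightarrow}B'\stackrel{b}{\longrightarrow}C'$ as a representative of the equivalence class $\mathfrak{s}(\delta')$, where $\delta'=x_{\ast}(z^{-1})^{\ast}\delta$. First I would check that $\delta'$ is a genuine $\mathbb{E}$-extension in $\mathbb{E}(C',A')$ and that the pair $(x,z)$ defines a morphism $\delta\to\delta'$ of $\mathbb{E}$-extensions. Since $z\colon C\to C'$ is an isomorphism, $z^{-1}\colon C'\to C$, so $(z^{-1})^{\ast}\delta\in\mathbb{E}(C',A)$ and $x_{\ast}(z^{-1})^{\ast}\delta\in\mathbb{E}(C',A')$; the required identity $x_{\ast}\delta=z^{\ast}\delta'$ then follows from the commutativity of push-forward and pull-back together with $z^{\ast}(z^{-1})^{\ast}=(z^{-1}z)^{\ast}=\id$ on $\mathbb{E}(C,-)$, so that $z^{\ast}\delta'=x_{\ast}z^{\ast}(z^{-1})^{\ast}\delta=x_{\ast}\delta$.

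Next, write $\mathfrak{s}(\delta')=[A'\stackrel{x'}{\longrightarrow}\overline{B}\stackrel{y'}{\longrightarrow}C']$ for the chosen realization of $\delta'$. Because $A\stackrel{u}{\longrightarrow}B\stackrel{v}{\longrightarrow}C$ realizes $\delta$ and $(x,z)\colon\delta\to\delta'$ is a morphism of $\mathbb{E}$-extensions, the defining property of a realization supplies a morphism $y''\colon B\to\overline{B}$ making $(x,y'',z)$ a morphism of $\mathbb{E}$-triangles, i.e. $y''u=x'x$ and $y'y''=zv$. Since $x$ and $z$ are isomorphisms, \cite[Corollary 3.6]{Na} forces $y''$ to be an isomorphism as well; this short-five-lemma step is where the hypothesis that the vertical maps are invertible is really used, and I expect it to be the main obstacle.

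Finally I would compare the two sequences $A'\stackrel{a}{\longrightarrow}B'\stackrel{b}{\longrightarrow}C'$ and $A'\stackrel{x'}{\longrightarrow}\overline{B}\stackrel{y'}{\longrightarrow}C'$ directly. The commutativity of the given diagram reads $yu=ax$ and $zv=by$, whence $a=yux^{-1}$ and $b=zvy^{-1}$. Setting $w=y''y^{-1}\colon B'\to\overline{B}$, which is an isomorphism, one computes $wa=y''ux^{-1}=x'xx^{-1}=x'$ and $y'w=y'y''y^{-1}=zvy^{-1}=b$. Thus $w$ realizes an equivalence of the two sequences, giving $[A'\stackrel{a}{\longrightarrow}B'\stackrel{b}{\longrightarrow}C']=\mathfrak{s}(\delta')$; equivalently $(a,b,\delta')$ is an $\mathbb{E}$-triangle, as claimed. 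The only remaining delicate point beyond the isomorphism step is the bookkeeping of variances in the first paragraph, which is routine once the domains and codomains of $x_{\ast}$ and $(z^{-1})^{\ast}$ are pinned down.
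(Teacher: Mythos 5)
Your argument is correct. Every step checks out: the variance bookkeeping gives $z^{\ast}\delta'=x_{\ast}z^{\ast}(z^{-1})^{\ast}\delta=x_{\ast}\delta$, so $(x,z):\delta\to\delta'$ is indeed a morphism of $\mathbb{E}$-extensions; the realization axiom (ET2) produces $y''$ with $(x,y'',z)$ a morphism of $\mathbb{E}$-triangles; \cite[Corollary 3.6]{Na} makes $y''$ an isomorphism; and $w=y''y^{-1}$ gives the desired equivalence of the bottom row with the chosen realization of $\delta'$. The route is genuinely different from the paper's, though, which simply invokes \cite[Proposition 3.7]{Na}: that proposition already says that twisting an $\mathbb{E}$-triangle by isomorphisms $x$ on the left end and $z$ on the right end yields the $\mathbb{E}$-triangle $A'\stackrel{ux^{-1}}{\longrightarrow}B\stackrel{zv}{\longrightarrow}C'$ realizing $x_{\ast}(z^{-1})^{\ast}\delta$, after which the paper only needs to note $a=yux^{-1}$, $b=zvy^{-1}$ and conjugate the middle term by $y$. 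In effect you have reproved the relevant special case of Proposition 3.7 from the more primitive ingredients (ET2) and the ``short five lemma'' Corollary 3.6. Your version is more self-contained and makes explicit where the invertibility of the vertical maps enters, at the cost of being longer; the paper's version is a two-line reduction to a known statement. Either is acceptable.
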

\begin{proof} First of all, by \cite[Proposition 3.7]{Na}, we know that $A'\stackrel{ux^{-1}}{\longrightarrow}B\stackrel{zv}{\longrightarrow}C'\stackrel{x_{\ast}(z^{-1})^{\ast}\delta}\dashrightarrow$ is an $\mathbb{E}$-triangle in $\mathscr{C}$. Note that $a=yux^{-1}$ and $b=zvy^{-1}$. It follows that  $A'\stackrel{a}{\longrightarrow}B'\stackrel{b}{\longrightarrow}C'\stackrel{x_{\ast}(z^{-1})^{\ast}\delta}\dashrightarrow$ is an $\mathbb{E}$-triangle in $\mathscr{C}$.

\end{proof}

\begin{lemma}\label{9}
Consider the $\mathbb{E}$-triangle
$$\Delta:~~~~~~~~~~~~~A\oplus X\stackrel{\tiny\begin{pmatrix} a&u\\0&x\end{pmatrix}}{\longrightarrow}B\oplus Y\stackrel{\tiny\begin{pmatrix} b&v\\0&y\end{pmatrix}}{\longrightarrow}C\oplus Z\stackrel{\tiny\begin{pmatrix} c&w\\0&z\end{pmatrix}}\dashrightarrow$$
in a pre-extriangulated category. If $(a,b,c)$ is a direct sum of two split $\mathbb{E}$-triangles $(\id_{A},0,0)$ and $(0,\id_{C},0)$. Then $(x,y,z)$ is an  $\mathbb{E}$-triangle. Dually, If $(x,y,z)$ is a direct sum of two split $\mathbb{E}$-triangles $(\id_{X},0,0)$ and $(0,\id_{Z},0)$. Then $(a,b,c)$ is an  $\mathbb{E}$-triangle.
\end{lemma}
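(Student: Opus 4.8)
The plan is to transform $\Delta$ by isomorphisms into a visibly block-diagonal $\mathbb{E}$-triangle and then split off the trivial part with Lemma~\ref{second}. Write $B=A\oplus C$ and record the hypothesis as $a=\left(\begin{smallmatrix}\id_A\\0\end{smallmatrix}\right)$, $b=\left(\begin{smallmatrix}0&\id_C\end{smallmatrix}\right)$, $c=0$, and put $u=\left(\begin{smallmatrix}u_1\\u_2\end{smallmatrix}\right)$. First I would extract the relations forced by $\Delta$ being an $\mathbb{E}$-triangle. The identity $\left(\begin{smallmatrix}b&v\\0&y\end{smallmatrix}\right)\left(\begin{smallmatrix}a&u\\0&x\end{smallmatrix}\right)=0$ gives $yx=0$ and $u_2=-vx$. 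More importantly, every $\mathbb{E}$-triangle $A\xrightarrow{x}B\xrightarrow{y}C\overset{\delta}{\dashrightarrow}$ satisfies $x_\ast\delta=0$ for its inflation $x$ (immediate from the exact sequences of \cite[Proposition~3.3]{Na}, since $\delta=\delta_\sharp(\id_C)$ lies in $\Ker x_\ast$). Applying this to the inflation $\left(\begin{smallmatrix}a&u\\0&x\end{smallmatrix}\right)$ and the extension $\delta=\left(\begin{smallmatrix}c&w\\0&z\end{smallmatrix}\right)$, the $\mathbb{E}(Z,A)$-component of the equation $\left(\begin{smallmatrix}a&u\\0&x\end{smallmatrix}\right)_{\!\ast}\delta=0$ reads (using $c=0$) the crucial identity $w=-(u_1)_\ast z$.

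The point I want to stress is that the off-diagonal extension term $w$ is \emph{not} free: a priori one cannot hope that $\Delta$ is literally a direct sum, and indeed no isomorphism can in general kill an arbitrary $w$. The relation $w=-(u_1)_\ast z$ is exactly what rescues the argument, since it exhibits $w$ as a ``coboundary'' absorbable by a shear. Concretely, I would apply the automorphism $\phi=\left(\begin{smallmatrix}\id_A&u_1\\0&\id_X\end{smallmatrix}\right)$ on the first term and $\psi=\left(\begin{smallmatrix}\id_A&0&0\\0&\id_C&v\\0&0&\id_Y\end{smallmatrix}\right)$ on the middle term, with the identity on the third. A direct check shows that $\psi\left(\begin{smallmatrix}a&u\\0&x\end{smallmatrix}\right)\phi^{-1}$ and $\left(\begin{smallmatrix}b&v\\0&y\end{smallmatrix}\right)\psi^{-1}$ become block-diagonal (here $u_2=-vx$ is what makes the middle row cancel), while by Lemma~\ref{11} the resulting isomorphic $\mathbb{E}$-triangle $\Delta'$ realizes $\phi_\ast\delta$. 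Because $w+(u_1)_\ast z=0$, one computes $\phi_\ast\delta=\left(\begin{smallmatrix}0&0\\0&z\end{smallmatrix}\right)$; thus $\Delta'$ has block-diagonal structure maps \emph{and} block-diagonal extension.

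Finally I would read off $\Delta'$ as the direct sum of the split $\mathbb{E}$-triangle $R:=(\id_A,0,0)\oplus(0,\id_C,0)$, which realizes $0$, and the sequence $X\xrightarrow{x}Y\xrightarrow{y}Z$ carrying the extension $z$, the total extension being $0\oplus z$. Since $\Delta'$ is an $\mathbb{E}$-triangle and $R$ is an $\mathbb{E}$-triangle, Lemma~\ref{second} (in its ``only if'' direction) forces $X\xrightarrow{x}Y\xrightarrow{y}Z\overset{z}{\dashrightarrow}$ to be an $\mathbb{E}$-triangle, which is the assertion. The dual statement is proved by the dual manipulation: now $(x,y,z)$ is the trivial split summand, so one invokes $y^\ast\delta=0$ in place of $x_\ast\delta=0$ to see that the relevant off-diagonal component of $\delta$ is again a coboundary, cleans the opposite corner by the analogous shears on the middle and third terms, and applies Lemma~\ref{second} to conclude that $(a,b,c)$ is an $\mathbb{E}$-triangle.

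The main obstacle is precisely the identification of $w$: the whole argument hinges on recognizing that the triangle axiom pins down the off-diagonal extension component as $w=-(u_1)_\ast z$, so that the single shear $\phi$ simultaneously diagonalizes the inflation and annihilates $w$. Once this is in hand, the rest is routine matrix bookkeeping (verifying that $\phi$ and $\psi$ diagonalize both structure maps and that $\phi_\ast\delta$ becomes block-diagonal) together with the citations of Lemmas~\ref{11} and~\ref{second}.
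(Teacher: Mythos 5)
Your proof is correct and follows essentially the same route as the paper's: after rewriting $B=A\oplus C$, the paper applies exactly your shears ($m=\left(\begin{smallmatrix}1&u_1\\0&1\end{smallmatrix}\right)$ on the first term, $n=\left(\begin{smallmatrix}1&0&0\\0&1&v\\0&0&1\end{smallmatrix}\right)$ on the middle, identity on the third), invokes Lemma \ref{11} to see the block-diagonal sequence realizes $\left(\begin{smallmatrix}0&0\\0&z\end{smallmatrix}\right)$, and concludes with Lemma \ref{second}. The identity $w=-(u_1)_\ast z$ that you isolate is precisely the paper's unstated ``straightforward verification'' that $m_\ast\left(\begin{smallmatrix}0&w\\0&z\end{smallmatrix}\right)=\left(\begin{smallmatrix}0&0\\0&z\end{smallmatrix}\right)$, so you have simply made explicit what the paper leaves implicit.
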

\begin{proof}
Suppose that $(a,b,c)$ is a direct sum of two split $\mathbb{E}$-triangles $(\id_{A},0,0)$ and $(0,\id_{C},0)$. Then $\Delta$ can be rephrased as follows:
$$\Delta:A\oplus X\stackrel{\tiny\begin{pmatrix} 1&u_{1}\\0&u_{2}\\0&x \end{pmatrix}}{\longrightarrow}A\oplus C\oplus Y\stackrel{\tiny\begin{pmatrix} 0&1&v\\0&0&y\end{pmatrix}}{\longrightarrow}C\oplus Z\stackrel{\tiny\begin{pmatrix} 0&w\\0&z\end{pmatrix}}\dashrightarrow.$$
Consider the  following commutative diagram
$$\xymatrix{
\Delta_{1} \ar[d]: &A\oplus X \ar[d]_-{m} \ar[r]^-{\tiny\begin{pmatrix} 1&u_{1}\\0&u_{2}\\0&x \end{pmatrix}} & A\oplus C\oplus Y \ar[d]_-{n} \ar[r]^-{\tiny\begin{pmatrix} 0&1&v\\0&0&y\end{pmatrix}} & C\oplus Z \ar[d]_-{l}  &  \\
\Delta_{2}  & A\oplus X \ar[r]^-{\tiny\begin{pmatrix} 1&0\\0&0\\0&x \end{pmatrix}} & A\oplus C\oplus Y \ar[r]^-{\tiny\begin{pmatrix} 0&1&0\\0&0&y \end{pmatrix}} &  C\oplus Z  &   }
$$
where $m=\tiny\begin{pmatrix} 1&u_{1}\\0&1\end{pmatrix},~n=\tiny\begin{pmatrix} 1&0&0\\0&1&v\\0&0&1 \end{pmatrix},~l=\tiny\begin{pmatrix} 1&0\\0&1\end{pmatrix}$. It is easy to check that the three morphisms yield an isomorphism of $\Delta_{1}$ and $\Delta_{2}$.  It is a straightforward verification that $m_{\ast}\tiny\begin{pmatrix} 0&w\\0&z\end{pmatrix}=l^{\ast}\tiny\begin{pmatrix} 0&0\\0&z\end{pmatrix}$. By Lemma \ref{11}, we know that $\Delta_{2}$ is a conflation and realizes $\tiny\begin{pmatrix} 0&0\\0&z\end{pmatrix}$. By Lemma \ref{second}, we obtain that $(x,y,z)$ is an  $\mathbb{E}$-triangle. For the dual case, it is proved similarly.
\end{proof}

Now we are ready to prove Theorem \ref{main}.\\


\textbf{{Proof of Theorem \ref{main}.}}

By Proposition \ref{7}, it is enough to show that $\widetilde{\mathscr{C}}$ is compatible to $\rm(ET4)$ and $\rm(ET4)^{op}$.
Let $\widetilde{\omega}\in\mathbb{F}(\widetilde{D},\widetilde{A})$ and $\widetilde{\omega'}\in\mathbb{F}(\widetilde{F},\widetilde{B})$ be two $\mathbb{F}$-extensions respectively realized by
\begin{equation}\label{3.5}
\widetilde{A}\stackrel{f}{\longrightarrow}\widetilde{B}\stackrel{f'}{\longrightarrow}\widetilde{D}\stackrel{\widetilde{\omega}}\dashrightarrow
\end{equation}
and
\begin{equation}\label{3.6}
\widetilde{B}\stackrel{g}{\longrightarrow}\widetilde{C}\stackrel{g'}{\longrightarrow}\widetilde{F}\stackrel{\widetilde{\omega'}}\dashrightarrow.
\end{equation}
There exist $\widetilde{X},\widetilde{Y}$ and $\widetilde{Z}$ such that $\widetilde{A}\oplus \widetilde{X}$, $\widetilde{D}\oplus \widetilde{Y}$, $\widetilde{F}\oplus \widetilde{Z}\in\mathscr{C}$. Clearly,
\begin{equation}\label{3.7}
\widetilde{X}\stackrel{\id_{\widetilde{X}}}{\longrightarrow}\widetilde{X}\stackrel{}{\longrightarrow}0\stackrel{0}\dashrightarrow
\end{equation}
and
\begin{equation}\label{3.8}
0\stackrel{}{\longrightarrow}\widetilde{Y}\stackrel{\id_{\widetilde{Y}}}{\longrightarrow}\widetilde{Y}\stackrel{0}\dashrightarrow
\end{equation}
are split $\mathbb{F}$-triangles. Considering the direct sum of $(\ref{3.5})$, $(\ref{3.7})$ and $(\ref{3.8})$, we obtain the following $\mathbb{F}$-triangle:
\begin{equation}\label{3.9}
\widetilde{A}\oplus \widetilde{X}\stackrel{\tiny\begin{pmatrix} f&0\\0&1\\0&0 \end{pmatrix}}{\longrightarrow}\widetilde{B}\oplus \widetilde{X}\oplus \widetilde{Y}\stackrel{\tiny\begin{pmatrix} f'&0&0\\0&0&1 \end{pmatrix}}{\longrightarrow}\widetilde{D}\oplus \widetilde{Y}\stackrel{\widetilde{\omega_{1}}}\dashrightarrow
\end{equation}
where $\widetilde{\omega_{1}}=\tiny\begin{pmatrix} \widetilde{\omega}&0\\0&0\end{pmatrix}$. Observe that $\widetilde{\omega_{1}}\in\mathbb{E}(\widetilde{D}\oplus \widetilde{Y},\widetilde{A}\oplus \widetilde{X})$ fits into an $\mathbb{E}$-triangle of $\mathscr{C}$ which is, via$\iota$, an $\mathbb{F}$-triangle of $\widetilde{\mathscr{C}}$. These two triangles are isomorphic since $\widetilde{\mathscr{C}}$ is a pre-extriangulated category. Then $(\ref{3.9})$ is an $\mathbb{E}$-triangle for $\widetilde{\omega_{1}}$ in $\mathscr{C}$.

Similarly, we consider the following  $\mathbb{F}$-triangle in $\widetilde{\mathscr{C}}$ by the direct sum of $(\ref{3.6})$, $(\id_{\widetilde{Y}},0,0)$ and $(0,\id_{\widetilde{Z}},0)$
\begin{equation}\label{3.10}
\widetilde{B}\oplus \widetilde{Y}\stackrel{\tiny\begin{pmatrix} g&0\\0&1\\0&0 \end{pmatrix}}{\longrightarrow}\widetilde{C}\oplus \widetilde{Y}\oplus \widetilde{Z}\stackrel{\tiny\begin{pmatrix} g'&0&0\\0&0&1 \end{pmatrix}}{\longrightarrow}\widetilde{F}\oplus \widetilde{Z}\stackrel{\widetilde{\xi}}\dashrightarrow
\end{equation}
where $\widetilde{\xi}=\tiny\begin{pmatrix}\widetilde{ \omega'}&0\\0&0\end{pmatrix}$. Adding $(\ref{3.7})$ to $(\ref{3.10})$, we have
\begin{equation}\label{3.11}
\widetilde{B}\oplus \widetilde{X}\oplus \widetilde{Y}\stackrel{h}{\longrightarrow}\widetilde{C}\oplus \widetilde{X}\oplus \widetilde{Y}\oplus \widetilde{Z } \stackrel{h'}{\longrightarrow}\widetilde{F}\oplus \widetilde{Z}\stackrel{\widetilde{\omega_{2}}}\dashrightarrow,
\end{equation}
where $h=\tiny\begin{pmatrix} g&0& 0\\0&1&0\\0&0 &1\\0&0& 0 \end{pmatrix}$, $h'=\tiny\begin{pmatrix} g'&0&0&0\\0&0&0&1 \end{pmatrix}$ and $\widetilde{\omega_{2}}=\tiny\begin{pmatrix} \widetilde{\omega'}&0\\0&0\\0&0\end{pmatrix}$.

By $\rm(ET4)$, we have the following  commutative diagram  since $(\ref{3.9})$ and $(\ref{3.11})$ are $\mathbb{E}$-triangles in $\mathscr{C}$.
$$\xymatrix{
  \widetilde{A}\oplus \widetilde{X}\ar@{=}[d]\ar[r]^-{\tiny\begin{pmatrix} f&0\\0&1\\0&0 \end{pmatrix}} &\widetilde{B}\oplus \widetilde{X}\oplus \widetilde{Y}\ar[d]^-{h} \ar[r]^-{\tiny\begin{pmatrix} f'&0&0\\0&0&1\end{pmatrix}} & \widetilde{D}\oplus \widetilde{Y}\ar[d]^-{\alpha=(\alpha_{1}~\alpha_{2})} \ar@{-->}[r]^-{\widetilde{\omega_{1}}}& \\
  \widetilde{A}\oplus \widetilde{X}\ar[r]_-{\tiny\begin{pmatrix} gf&0\\0&1\\0&0\\0&0 \end{pmatrix}} & \widetilde{C}\oplus\widetilde{ X}\oplus \widetilde{Y}\oplus \widetilde{Z}\ar[d]^-{h'} \ar[r]_-{i=(i_{1}i_{2}i_{3}i_{4})} & H\ar[d]^-{\beta=\tiny\begin{pmatrix}\beta_{1}\\\beta_{2} \end{pmatrix}}\ar@{-->}[r]^-{\widetilde{\omega_{3}}} &\\
   & \widetilde{F}\oplus \widetilde{Z}\ar@{=}[r]\ar@{-->}[d]^-{\widetilde{\omega_{2}}} & \widetilde{F}\oplus \widetilde{Z}\ar@{-->}[d]^-{\widetilde{\omega_{4}}}  &\\
   \ & & & } $$

By some direct calculations, we obtain
$$\begin{cases}
(\alpha_{1}f',0,\alpha_{2})=(i_{1}g,i_{2},i_{3})& \text{}\\\beta_{1}i_{1}=g',~~~\beta_{2}i_{4}=1
& \text{}\\(\alpha_{1}~\alpha_{2})^{\bullet}\widetilde{\omega_{3}}=(\alpha_{1}~\alpha_{2})^{\bullet}(\widetilde{\omega_{31}}~\widetilde{\omega_{32}})^{T}=\tiny\begin{pmatrix} \alpha_{1}^{\bullet}\widetilde{\omega_{31}}&0\\0&\alpha_{2}^{\bullet}\widetilde{\omega_{32}}\end{pmatrix}=\widetilde{\omega_{1}}\\\tiny\begin{pmatrix} f&0\\0&1\\0&0\end{pmatrix}_{\bullet}(\widetilde{\omega_{31}}~\widetilde{\omega_{32}})^{T}=\tiny\begin{pmatrix} f_{\bullet}\widetilde{\omega_{31}}\\\widetilde{\omega_{32}}\\0\end{pmatrix}=\beta^{\bullet}\widetilde{\omega_{2}}=\tiny\begin{pmatrix} \beta_{1}^{\bullet}\widetilde{\omega'}\\0\\0\end{pmatrix}
\\\tiny\begin{pmatrix} f'&0&0\\0&0&1 \end{pmatrix}_{\bullet}\widetilde{\omega_{2}}=\tiny\begin{pmatrix} f'_{\bullet}\widetilde{\omega'}&0\\0&0 \end{pmatrix}=\widetilde{\omega_{4}}=\tiny\begin{pmatrix} \widetilde{\omega_{41}}&\widetilde{\omega_{42}}\\ \widetilde{\omega_{43}}&\widetilde{\omega_{44}} \end{pmatrix}.
\text{ }
\end{cases}
$$
Thus $\widetilde{\omega_{4}}=\tiny\begin{pmatrix} \widetilde{\omega_{41}}&0\\0&0\end{pmatrix}$, where $\widetilde{\omega_{41}}\in\mathbb{F}(\widetilde{F},\widetilde{D})$.
We have $\mathbb{F}$-triangles
\begin{equation}\label{3.12}
\widetilde{D}\stackrel{p_{1}}{\longrightarrow}\widetilde{K}\stackrel{p_{2}}{\longrightarrow}\widetilde{F}\stackrel{\widetilde{\omega_{41}}}\dashrightarrow
\end{equation}
and
\begin{equation}\label{3.13}
\widetilde{Y}\stackrel{\tiny\begin{pmatrix} 1\\0\end{pmatrix}}{\longrightarrow}\widetilde{Y}\oplus \widetilde{Z}\stackrel{\tiny\begin{pmatrix} 0&1\end{pmatrix}}{\longrightarrow}\widetilde{Z}\stackrel{0}\dashrightarrow.
\end{equation}
Consider the direct sum of $(\ref{3.12})$ and $(\ref{3.13})$, we have an isomorphism $s=(s_{1},s_{2},s_{3})^{T}:H\rightarrow \widetilde{K}\oplus \widetilde{Y}\oplus \widetilde{Z}$ with its inverse $s'=(s_{1}',s_{2}',s_{3}')$ such that

$$s\alpha=\tiny\begin{pmatrix} s_{1}\alpha_{1}& s_{2}\alpha_{2}\\s_{2}\alpha_{1}&s_{2}\alpha_{2}\\s_{3}\alpha_{1}& s_{3}\alpha_{2}\end{pmatrix}=\tiny\begin{pmatrix}p_{1}& 0\\0&1\\0& 0\end{pmatrix}$$
$$\beta s'=\tiny\begin{pmatrix} \beta_{1}s_{1}'&\beta_{1}s_{2}'&\beta_{1}s_{3}'&\\\beta_{2}s_{1}'&\beta_{2}s_{2}'&\beta_{2}s_{3}'&\end{pmatrix}=\tiny\begin{pmatrix}p_{2}&0&0\\0&0&1\end{pmatrix}.$$

Let us modify the diagram above. Firstly, we consider the following commutative diagram:
$$
\xymatrix{
  \widetilde{A}\oplus \widetilde{X} \ar@{=}[d] \ar[r] & \widetilde{C}\oplus \widetilde{X}\oplus \widetilde{Y}\oplus \widetilde{Z}\ar@{=}[d] \ar[r]^-{i} & H \ar[d]_-{s} \ar@{-->}[r]^-{\widetilde{\omega_{3}}} &  \\
   \widetilde{A}\oplus \widetilde{X}\ar[r]& \widetilde{C}\oplus \widetilde{X}\oplus \widetilde{Y}\oplus \widetilde{Z}  \ar[r]^-{si} & \widetilde{K}\oplus \widetilde{Y}\oplus \widetilde{Z} \ar@{-->}[r]^-{\widetilde{\omega_{5}}} &   }$$
where $\widetilde{\omega_{5}}={s'}^{\bullet}\widetilde{\omega_{3}}=\tiny\begin{pmatrix} s_{1}'^{\bullet}\widetilde{\omega_{31}}& s_{2}'^{\bullet}\widetilde{\omega_{31}}& s_{3}'^{\bullet}\widetilde{\omega_{31}}\\0&0&0\end{pmatrix}$. Obviously, the second row is an $\mathbb{E}$-triangle for $\omega_{5}\in\mathbb{E}(\widetilde{K}\oplus \widetilde{Y}\oplus \widetilde{Z},\widetilde{A}\oplus \widetilde{X})$ by the isomorphism. Now we have a new commutative diagram in $\widetilde{\mathscr{C}}$  as follows:
$$\xymatrix{
  \widetilde{A}\oplus \widetilde{X}\ar@{=}[d]\ar[r]^-{\tiny\begin{pmatrix} f&0\\0&1\\0&0 \end{pmatrix}~~~~~} &\widetilde{B}\oplus \widetilde{X}\oplus \widetilde{Y}\ar[d]^-{h} \ar[r]^-{\tiny\begin{pmatrix} f'&0&0\\0&0&1\end{pmatrix}} & \widetilde{D}\oplus \widetilde{Y}\ar[d]^-{s\alpha} \ar@{-->}[r]^-{\widetilde{\omega_{1}}}&\\
  \widetilde{A}\oplus \widetilde{X}\ar[r] & \widetilde{C}\oplus \widetilde{X}\oplus \widetilde{Y}\oplus \widetilde{Z}\ar[d]^-{h'} \ar[r]^-{si} & \widetilde{K}\oplus \widetilde{Y}\oplus \widetilde{Z} \ar[d]^-{\beta s'}\ar@{-->}[r]^-{\widetilde{\omega_{5}}} &\\
   & \widetilde{F}\oplus \widetilde{Z}\ar@{=}[r]\ar@{-->}[d]^-{\widetilde{\omega_{2}}} & \widetilde{F}\oplus \widetilde{Z}\ar@{-->}[d]^-{\widetilde{\omega_{4}}}  &\\
   \ & & & } $$
Calculating again, we have
$$
\begin{cases}
s_{1}i_{1}g=p_{1}f', s_{1}\alpha_{2}=s_{3}\alpha_{2}=0, s_{2}\alpha_{2}=1 &\text{}\\
p_{2}s_{1}i_{1}=g', s_{3}i_{1}=0, s_{3}i_{4}=1, s_{3}i_{1}=0
& \text{}\\(s\alpha)^{\bullet}\widetilde{\omega_{5}}=\widetilde{\omega_{1}},~\widetilde{\omega}=p_{1}^{\bullet}s_{1}'^{\bullet}\widetilde{\omega_{31}},~s_{2}'^{\bullet}\widetilde{\omega_{31}}=0
\\(\beta s')^{\bullet}\widetilde{\omega_{2}}={\tiny\begin{pmatrix} f&0\\0&1\\0&0 \end{pmatrix}}_{\bullet}\widetilde{\omega_{5}},~p_{2}^{\bullet}\widetilde{\omega'}=f_{\bullet}s_{1}'^{\bullet}\widetilde{\omega_{31}}.
\text{ }
\end{cases}$$
Obviously, we have $si=\tiny\begin{pmatrix}s_{1}i_{1}&0&0&s_{1}i_{4}\\s_{2}i_{1}&0&1&s_{2}i_{4}\\0&0&0&0\end{pmatrix}$ and $\widetilde{\omega_{5}}=\tiny\begin{pmatrix} s_{1}'^{\bullet}\widetilde{\omega_{31}}& 0& s_{3}'^{\bullet}\widetilde{\omega_{31}}\\0&0&0\end{pmatrix}$.
Putting all these together, we obtain the following commutative diagram in $\widetilde{\mathscr{C}}$:
$$\xymatrix{
  \widetilde{A}\ar@{=}[d]\ar[r]^-{f} &\widetilde{B}\ar[d]^-{g} \ar[r]^-{f'} & \widetilde{D}\ar[d]^-{p_{1}} \ar@{-->}[r]^-{\widetilde{\omega}}& \\
  \widetilde{A}\ar[r]^-{gf} & \widetilde{C}\ar[d]^-{g'} \ar[r]^-{s_{1}i_{1}} & \widetilde{K}\ar[d]^-{p_{2}} \ar@{-->}[r]^-{{s_{1}'}^{\bullet}\widetilde{\omega_{31}}} &\\
   & \widetilde{F}\ar@{=}[r]\ar@{-->}[d]^-{\widetilde{\omega'}} & \widetilde{F}\ar@{-->}[d]^-{\widetilde{\omega_{41}}}  &\\
   \ & & & } $$
which is compatible to $\rm(ET4)$. It should be noted that
\begin{equation}\label{3.14}\widetilde{A}\stackrel{gf}{\longrightarrow}\widetilde{C}\stackrel{s_{1}i_{1}}{\longrightarrow}\widetilde{K}\stackrel{{s_{1}'}^{\bullet}\widetilde{\omega_{31}}}\dashrightarrow\end{equation}
is an $\mathbb{F}$-triangle in $\widetilde{\mathscr{C}}$. Indeed, we have the following morphisms of $\mathbb{F}$-triangles
$$\xymatrix{
 \bigtriangleup(1)\ar@{^(->}[d]& \widetilde{X} \ar@{^(->}[d] \ar[r]^-{1\choose0} &\widetilde{X}\oplus \widetilde{Y}\ar@{^(->}[d] \ar[r]^-{(0\quad1)} &\widetilde{Y } \ar@{^(->}[d]\ar@{-->}[r]^-{0} &  \\
 \bigtriangleup(2)\ar@{^(->}[d] &\widetilde{X}\oplus \widetilde{A} \ar@{^(->}[d] \ar[r]^-{\tiny\begin{pmatrix} 1&0\\0&0\\0&gf \end{pmatrix}} & \widetilde{X}\oplus \widetilde{Y}\oplus \widetilde{C} \ar@{^(->}[d] \ar[r]^-{\tiny\begin{pmatrix} 0&1&s_{2}i_{1}\\0&0&s_{1}i_{1} \end{pmatrix}} & \widetilde{Y}\oplus \widetilde{K} \ar@{^(->}[d] \ar@{-->}[r]^-{\tiny\begin{pmatrix} 0&0\\0&s_{1}'^{\bullet}\widetilde{\omega_{31}}\end{pmatrix}} &  \\
  \bigtriangleup(3)&\widetilde{A}\oplus \widetilde{X} \ar[r]^-{\sigma} &\widetilde{C}\oplus \widetilde{X}\oplus \widetilde{Y}\oplus \widetilde{Z}\ar[r]^-{\mu} & \widetilde{K}\oplus \widetilde{Y}\oplus \widetilde{Z} \ar@{-->}[r]^-{\widetilde{\omega_{5}}} &   }$$
where $\sigma=\tiny\begin{pmatrix} gf&0\\0&1\\0&0\\0&0 \end{pmatrix}$, $\mu=\tiny\begin{pmatrix} s_{1}i_{1}&0&0&s_{1}i_{4}\\s_{2}i_{1}&0&1 &s_{2}i_{4}\\0&0&0&1 \end{pmatrix}$, $\widetilde{\omega_{5}}=\tiny\begin{pmatrix} s_{1}'^{\bullet}\widetilde{\omega_{31}}&0&s_{3}'^{\bullet}\widetilde{\omega_{31}}\\0&0&0\end{pmatrix}$, and all $\hookrightarrow$ are the canonical embeddings. By Lemma \ref{9}, we know that $\bigtriangleup(2)$ is an $\mathbb{F}$-triangle in $\widetilde{\mathscr{C}}$. It follows that $(\ref{3.14})$ is an $\mathbb{F}$-triangle since $\bigtriangleup(1)$ is a splitting $\mathbb{F}$-triangle.
\fin\\

Recall that an additive functor between two extriangulated categories is {\em exact} if the functor sends conflations to conflations.

\begin{corollary}Let $\mathscr{C}$ be an extriangulated category. Then its idempotent completion  $\widetilde{\mathscr{C}}$ admits a smallest structure of an extriangulated  category such that the canonical functor $i: \mathscr{C}\rightarrow\widetilde{ \mathscr{C}}$ becomes exact. Moreover, if $\widetilde{\mathscr{C}}$ is endowed with this structure, then for each idempotent complete extriangulated category $\mathscr{D}$, the functor$\iota$ induces an equivalence
$$\xymatrix{\Hom_{exact}(\widetilde{ \mathscr{C}},\mathscr{D})\ar[r]^-{\sim}& \Hom_{exact}(\mathscr{C},\mathscr{D})},$$
where $\Hom_{exact}$ denotes the category of exact functors between two extriangulated categories.
\end{corollary}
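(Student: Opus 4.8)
The plan is to show that the structure $(\widetilde{\mathscr{C}},\mathbb{F},\mathfrak{c})$ produced in Theorem \ref{main} is itself the required smallest structure, and then to upgrade the additive equivalence of Lemma \ref{third} to exact functors. First I would record that $\iota=i$ is exact: since $\mathbb{F}\mid_{\mathscr{C}^{op}\times\mathscr{C}}=\mathbb{E}$ and $\mathfrak{c}$ restricts to $\mathfrak{s}$ along the fully faithful functor $\tilde{i}$ (as observed at the end of Step 1), the image under $i$ of any conflation of $\mathscr{C}$ realizes an $\mathbb{F}$-extension and is therefore a conflation of $\widetilde{\mathscr{C}}$. The auxiliary fact that drives everything is that \emph{in an idempotent complete extriangulated category every direct summand of a conflation is again a conflation}. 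I would deduce this from Lemma \ref{second}: an idempotent endomorphism of a conflation (a triple of commuting idempotents) splits objectwise because the category is idempotent complete, and, being a morphism of sequences, it splits the two structure maps into block-diagonal form; thus the conflation becomes isomorphic to a direct sum of two complementary sequences, and Lemma \ref{second} together with closure of conflations under isomorphism forces each summand to be a conflation.

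For the first assertion, the key structural observation is already contained in Step 2: by the defining diagram $(\ref{remark})$ together with the splitting $\Delta_{2}\stackrel{i}{\longrightarrow}\Delta\stackrel{\pi}{\longrightarrow}\Delta_{2}$, every $\mathbb{F}$-triangle is a direct summand, in $\widetilde{\mathscr{C}}$, of the image under $i$ of an $\mathbb{E}$-triangle of $\mathscr{C}$. Hence if $(\mathbb{E}',\mathfrak{s}')$ is any extriangulated structure on $\widetilde{\mathscr{C}}$ for which $i$ is exact, then $i$ carries each $\mathscr{C}$-conflation to an $\mathfrak{s}'$-conflation, and by the auxiliary fact applied in the idempotent complete category $\widetilde{\mathscr{C}}$ the direct summand defining a given $\mathbb{F}$-triangle is also an $\mathfrak{s}'$-conflation. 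Therefore every $\mathfrak{c}$-conflation is an $\mathfrak{s}'$-conflation, i.e. $(\mathbb{F},\mathfrak{c})$ is the smallest such structure.

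For the equivalence, I would restrict the additive equivalence $\Hom_{add}(\widetilde{\mathscr{C}},\mathscr{D})\stackrel{\sim}{\longrightarrow}\Hom_{add}(\mathscr{C},\mathscr{D})$ of Lemma \ref{third}, which sends $\widetilde{F}\mapsto \widetilde{F}\circ\iota$, to the full subcategories of exact functors. This restriction is well defined because $\iota$ is exact and a composite of exact functors is exact, and it is automatically fully faithful since $\Hom_{exact}$ is a full subcategory of $\Hom_{add}$ (the morphisms being just natural transformations). What remains, and is the heart of the matter, is essential surjectivity: given an exact $F\colon\mathscr{C}\to\mathscr{D}$, its additive extension $\widetilde{F}$ (with $\widetilde{F}\circ\iota\cong F$) must be shown to be exact. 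I would take an arbitrary $\mathbb{F}$-triangle, realize it as a direct summand of $i(\Delta)$ for some $\mathbb{E}$-triangle $\Delta$ of $\mathscr{C}$ as above, and apply $\widetilde{F}$; since $\widetilde{F}(i(\Delta))\cong F(\Delta)$ is a conflation of $\mathscr{D}$ and $\widetilde{F}$ of the given $\mathbb{F}$-triangle is a direct summand of it, the auxiliary fact, now used in the idempotent complete category $\mathscr{D}$, shows it is a conflation. Thus $\widetilde{F}$ is exact, which gives essential surjectivity and hence the equivalence.

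The main obstacle, on which the whole argument hinges, is the summand-closedness of conflations in idempotent complete extriangulated categories; the delicate point there is checking that an idempotent morphism of a conflation really does split the \emph{two structure maps} compatibly into block-diagonal form, so that Lemma \ref{second} can legitimately be invoked. Once this is secured, the remainder is a formal restriction of Lemma \ref{third} and a bookkeeping of direct summands of $\mathbb{E}$-triangles, both of which reuse the explicit summand description of $\mathbb{F}$-triangles from Step 2.
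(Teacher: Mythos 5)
Your proposal is correct and follows essentially the same route as the paper: the paper's (very terse) proof likewise deduces minimality from the fact that every $\mathbb{F}$-triangle is a direct summand of the image of an $\mathbb{E}$-triangle together with Lemma \ref{second}, and obtains the equivalence by restricting Lemma \ref{third} and checking exactness of the extended functor via the same summand argument. Your write-up simply makes explicit the summand-closedness of conflations that the paper leaves implicit in its citations of Lemma \ref{second}.
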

\begin{proof}
Suppose that $\widetilde{\mathscr{C}}$ has another extriangulated structure such that $i: \mathscr{C}\rightarrow\widetilde{ \mathscr{C}}$ becomes exact, then it contains all $\mathbb{F}$-triangles by Lemma \ref{second} and Lemma \ref{2}. Then the rest follows from Lemma \ref{third} and Lemma \ref{second}.
\end{proof}

\begin{remark} Recall that each exact structure or triangulated structure forms an extriangulated structure in the sense of \cite[Example 2.13]{Na} and \cite[Proposition 3.22]{Na}, respectively. Let $(\mathscr{C},\mathbb{E},\mathfrak{s})$ be an extriangulated category which is neither exact nor triangulated. By Theorem \ref{main}, we know that the idempotent completion $(\widetilde{\mathscr{C}},\mathbb{F},\mathfrak{c})$ of $(\mathscr{C},\mathbb{E},\mathfrak{s})$ also carries an extriangulated structure. We claim that this extriangulated structure is neither exact nor triangulated.

Assume that $(\widetilde{\mathscr{C}},\mathbb{F},\mathfrak{c})$ is an exact category. By [8, Corollary 3.18], we know that any inflation in $\widetilde{\mathscr{C}}$ is monomorphic, and any deflation in $\widetilde{\mathscr{C}}$ is epimorphic. It follows that any inflation in $\mathscr{C}$ is monomorphic, and any deflation in $\mathscr{C}$ is epimorphic. Thus, by again [8, Corollary 3.18], $(\mathscr{C},\mathbb{E},\mathfrak{s})$ is exact, this is a contradiction. Hence, $(\widetilde{\mathscr{C}},\mathbb{F},\mathfrak{c})$ is not exact. Next, we prove $(\widetilde{\mathscr{C}},\mathbb{F},\mathfrak{c})$ is not triangulated.

First of all, we observe that each project object in $\mathscr{C}$ is also projective in $\widetilde{\mathscr{C}}$. Indeed, let $P$ be a projective object in $\mathscr{C}$, i.e., $\mathbb{E}(P,B)=0$ for any $B\in\mathscr{C}$. For any $\widetilde{B}=(B,e)\in\widetilde{\mathscr{C}}$, take $\widetilde{B}'=(B,\id_{B}-e)\in\widetilde{\mathscr{C}}$, then $\mathbb{F}(P,\widetilde{B})\oplus\mathbb{F}(P,\widetilde{B}')\cong\mathbb{F}(P,\widetilde{B}\oplus\widetilde{B}')\cong\mathbb{E}(P,B)=0$, thus $\mathbb{F}(P,\widetilde{B})=0$, that is, $P$ is projective in $\widetilde{\mathscr{C}}$. Similarly, we can prove each injective object in $\mathscr{C}$ is also injective in $\widetilde{\mathscr{C}}$.

Suppose that $(\widetilde{\mathscr{C}},\mathbb{F},\mathfrak{c})$ is a triangulated category. By \cite[Corollary 7.6]{Na}, $(\widetilde{\mathscr{C}},\mathbb{F},\mathfrak{c})$ is Frobenius with $\mathcal{P}=\mathcal{I}=0$. According to the observations above, we obtain that $\mathscr{C}$ has no nonzero projective or injective objects.
Now we prove $\mathscr{C}$ has enough projectives and injectives. For any non-zero object $C=(C,\id_{C})\in\widetilde{\mathscr{C}}$, there exists an $\mathbb{F}$-triangle
\begin{equation}\label{j1}
\widetilde{A}=(A,e)\stackrel{}{\longrightarrow}0\stackrel{}{\longrightarrow}{(C,\id_{C})}\stackrel{\widetilde{\omega}}\dashrightarrow
\end{equation}
since $\widetilde{\mathscr{C}}$ has enough projectives and $\mathcal{P}=0$ in $\widetilde{\mathscr{C}}$. Take $\widetilde{A}'=(A,\id_{A}-e)$, we consider the following $\mathbb{F}$-triangle obtained by the direct sum of (\ref{j1}) and the split $\mathbb{F}$-triangle $(\id_{\widetilde{A}'},0,0)$
\begin{equation}\label{j2}(A,\id_{A})\stackrel{}{\longrightarrow}\widetilde{A}'\stackrel{}{\longrightarrow}(C,\id_{C})\stackrel{\tiny\begin{pmatrix} \widetilde{\omega}&0\\0&0 \end{pmatrix}}\dashrightarrow.
\end{equation}
Then $\widetilde{A}'\in\mathscr{C}$, since $\begin{pmatrix} \widetilde{\omega}&0\\0&0 \end{pmatrix}\in\mathbb{E}(C,A)$. It follows that either $e=0$ or $e=\id_{A}$. If $e=0$, then $\widetilde{A}\cong0$, thus by (\ref{j1}), $C=0$, this contradicts the hypothesis $C$ is nonzero. Hence, $e=\id_{A}$. Thus, the $\mathbb{F}$-triangle (\ref{j1}) is actually an $\mathbb{E}$-triangle ${A}\stackrel{}{\longrightarrow}0\stackrel{}{\longrightarrow}{C}\stackrel{\widetilde{\omega}}\dashrightarrow$, so $\mathscr{C}$ has enough projectives. Similarly, it is proved that $\mathscr{C}$ has enough injectives.
Thus, by \cite[Corollary 7.6]{Na}, $\mathscr{C}$ is triangulated, this is a contradiction.
Hence, $(\widetilde{\mathscr{C}},\mathbb{F},\mathfrak{c})$ is not a triangulated category.

\end{remark}

\section{Applications}

In this section, we give two applications about the idempotent completion of extriangulated categories.

\subsection{Cotorsion pairs}
In this subsection,  we focus our attention on cotorsion pairs in the idempotent completion of an extriangulated category. Let $(\mathscr{C},\mathbb{E},\mathfrak{s})$ be an extriangualted category with the idempotent completion $(\widetilde{\mathscr{C}},\mathbb{F},\mathfrak{c})$.

Recall that the pair $(\mathcal{T},\mathcal{F})$ is called a {\em cotorsion pair} in $\mathscr{C}$ if it satisfies the following conditions:

$\bullet$ $\mathbb{E}(\mathcal{T},\mathcal{F})=0.$

$\bullet$ For any $C\in\mathscr{C}$, there exists a conflation $F\longrightarrow T\longrightarrow C$ satisfying $F\in\mathcal{F}$, $T\in\mathcal{T}$.

$\bullet$ For any $C\in\mathscr{C}$, there exists a conflation $C\longrightarrow F'\longrightarrow T'$ satisfying $F'\in\mathcal{F}$, $T'\in\mathcal{T}$.

Obviously, the cotorsion pair induces a pair of subcategories $(\widetilde{\mathcal{T}},\widetilde{\mathcal{F}})$ in $\widetilde{\mathscr{C}}$, where
$$\widetilde{\mathcal{T}}=\{(T,t)\mid T\in\mathcal{T},~t\in \End(T)~is~an~idempotent\}$$
and
$$\widetilde{\mathcal{F}}=\{(F,f)\mid F\in\mathcal{F},~f\in \End(F)~is~an~idempotent\}.$$

\begin{theorem}\label{4.1}
Let $\mathscr{C}$ be an extriangulated category. Then any cotorsion pair $(\mathcal{T},\mathcal{F})$ in $\mathscr{C}$ which satisfies $\mathscr{C}(\mathcal{T},\mathcal{F})=0$ induces a cotorsion pair $(\widetilde{\mathcal{T}},\widetilde{\mathcal{F}})$ in $\widetilde{\mathscr{C}}$.
\end{theorem}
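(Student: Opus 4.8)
The plan is to verify the three defining conditions of a cotorsion pair for $(\widetilde{\mathcal{T}},\widetilde{\mathcal{F}})$ in $(\widetilde{\mathscr{C}},\mathbb{F},\mathfrak{c})$. The orthogonality is immediate: for $\widetilde{T}=(T,t)\in\widetilde{\mathcal{T}}$ and $\widetilde{F}=(F,f)\in\widetilde{\mathcal{F}}$, every element of $\mathbb{F}(\widetilde{T},\widetilde{F})$ is a triplet $(\omega,t,f)$ with $\omega\in\mathbb{E}(T,F)$, and $\mathbb{E}(T,F)=0$ since $T\in\mathcal{T}$, $F\in\mathcal{F}$ and $\mathbb{E}(\mathcal{T},\mathcal{F})=0$; hence $\mathbb{F}(\widetilde{\mathcal{T}},\widetilde{\mathcal{F}})=0$. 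All the real work lies in producing the two approximation $\mathbb{F}$-triangles for an arbitrary $\widetilde{C}=(C,e_c)\in\widetilde{\mathscr{C}}$.

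For the first, I would start from the conflation $F\stackrel{x}{\longrightarrow}T\stackrel{y}{\longrightarrow}C\stackrel{\delta}\dashrightarrow$ provided by the cotorsion pair in $\mathscr{C}$, with $F\in\mathcal{F}$, $T\in\mathcal{T}$, and lift the idempotent $e_c$ to $T$. The exact sequence $\mathscr{C}(T,F)\to\mathscr{C}(T,T)\stackrel{y_\ast}{\to}\mathscr{C}(T,C)\stackrel{\delta_\sharp}{\to}\mathbb{E}(T,F)$ coming from \cite[Proposition 3.3]{Na} has $\mathbb{E}(T,F)=0$, so $e_cy$ lifts to some $t\colon T\to T$ with $yt=e_cy$; as moreover $\mathscr{C}(T,F)=0$ by the hypothesis $\mathscr{C}(\mathcal{T},\mathcal{F})=0$, this lift is \emph{unique}. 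Uniqueness forces $t^2=t$, because $t^{2}$ also satisfies $yt^2=e_cyt=e_cy$. I would then apply $\rm(ET3)^{op}$ to the square $yt=e_cy$ to obtain $f\colon F\to F$ with $xf=tx$ making $(f,t,e_c)$ a morphism of $\mathbb{E}$-triangles, so that $f_\ast\delta=e_c^\ast\delta$, and invoke Lemma \ref{first} (two of the three vertical maps, namely $t$ and $e_c$, being idempotent) to replace $f$ by an idempotent $\widehat{f}$ keeping the diagram commutative and the morphism-of-triangles relation intact.

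Now $(\widehat{f},t,e_c)$ is an idempotent endomorphism of the $\mathbb{F}$-triangle $\iota(F)\to\iota(T)\to\iota(C)\dashrightarrow$ (the image under $\iota$ of the original $\mathbb{E}$-triangle, using $\mathbb{F}\mid_{\mathscr{C}^{op}\times\mathscr{C}}=\mathbb{E}$). Splitting this idempotent and its complement decomposes $\iota(F)\cong\widetilde{F}\oplus\widetilde{F}'$, $\iota(T)\cong\widetilde{T}\oplus\widetilde{T}'$, $\iota(C)\cong\widetilde{C}\oplus\widetilde{C}'$ with $\widetilde{F}=(F,\widehat{f})$, $\widetilde{T}=(T,t)$. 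A short computation from $x\widehat{f}=tx$ and $yt=e_cy$ shows $x$ and $y$ are block-diagonal for these decompositions, while $\widehat{f}_\ast\delta=e_c^\ast\delta$ makes the extension block-diagonal too; thus the $\mathbb{F}$-triangle is isomorphic to a block-diagonal direct sum, and Lemma \ref{second} extracts $\widetilde{F}\stackrel{}{\longrightarrow}\widetilde{T}\stackrel{}{\longrightarrow}\widetilde{C}\dashrightarrow$ as an $\mathbb{F}$-triangle with $\widetilde{F}\in\widetilde{\mathcal{F}}$, $\widetilde{T}\in\widetilde{\mathcal{T}}$. The second approximation $\widetilde{C}\to\widetilde{F}'\to\widetilde{T}'$ is obtained dually: start from $C\stackrel{x'}{\longrightarrow}F'\stackrel{y'}{\longrightarrow}T'$, lift $e_c$ to an idempotent on $F'$ via the exact sequence $\mathscr{C}(T',F')\to\mathscr{C}(F',F')\to\mathscr{C}(C,F')\to\mathbb{E}(T',F')$ together with $\mathbb{E}(T',F')=\mathscr{C}(T',F')=0$, complete with $\rm(ET3)$ and Lemma \ref{first}, and split as before.

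The main obstacle is that the naive lift of $e_c$ is only an endomorphism, and one must make all three components of the lifted ladder idempotent \emph{simultaneously} without destroying the relation $f_\ast\delta=e_c^\ast\delta$, which is exactly what makes the extension split block-diagonally so that Lemma \ref{second} applies. The extra hypothesis $\mathscr{C}(\mathcal{T},\mathcal{F})=0$ is the crucial input here: it is precisely what guarantees uniqueness of the key lift, hence its idempotency, and explains why ordinary $\mathbb{E}$-orthogonality alone is not assumed to be enough.
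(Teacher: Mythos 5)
Your proposal is correct and follows essentially the same route as the paper: orthogonality is immediate, the idempotent $e_c$ is lifted along the approximation conflation using $\mathbb{E}(T,F)=0$, the hypothesis $\mathscr{C}(\mathcal{T},\mathcal{F})=0$ is what forces the lifted ladder to consist of idempotents, Lemma \ref{first} repairs the remaining component, and the block-diagonal splitting together with Lemma \ref{second} extracts the desired $\mathbb{F}$-triangle (the second approximation being dual). Your observation that uniqueness of the lift yields $t^2=t$ outright is a slight streamlining of the paper's argument, which instead shows $(t^2-t)^2=0$ and then passes to the idempotent $t+(t^2-t)-2t(t^2-t)$.
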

\begin{proof}
We have to verify the three conditions in the definition of cotorsion pairs.

$(1)$ For any two objects $\widetilde{T}=(T,t)\in\widetilde{\mathcal{T}}$ and $\widetilde{F}=(F,f)\in\widetilde{\mathcal{F}}$. Clearly, $\mathbb{F}(\widetilde{T},\widetilde{F})=0$ since $\mathbb{E}(T,F)=0$.

$(2)$ For any $\widetilde{C}=(C,e)\in\widetilde{\mathscr{C}}$ with $C\in\mathscr{C}$. Then there exists an $\mathbb{E}$-triangle in $\mathscr{C}$
$$\Delta:F\stackrel{x}\longrightarrow T\stackrel{y}\longrightarrow C\stackrel{\delta}\dashrightarrow$$
satisfying $F\in\mathcal{F}$, $T\in\mathcal{T}$.  Set $C_{1}=(C,e)$, $C_{2}=(C,\id_{C}-e)$, then we have $(C,\id_{C})\cong C_{1}\oplus C_{2}$. For the idempotent $e\in \mathscr{C}(C,C)$, since $\mathbb{E}({T},{F})=0$, we obtain a morphism
of $\mathbb{E}$-triangles:
$$\xymatrix{
  F  \ar[r]^-{x}\ar[d]_-{f} & T  \ar[r]^-{y} \ar[d]_-{t}& C \ar[d]_-{e} \ar@{-->}[r]^-{\delta} &  \\
  F \ar[r]^-{x} & T \ar[r]^-{y} & C \ar@{-->}[r]^-{\delta} &   .}
$$
Since $yt=ey$, we have $yt^2=eyt=e^2y=ey$. Thus there exists $h:F\rightarrow F$ such that $(h,t^{2},e)$ is also a morphism of $\mathbb{E}$-triangles from $\Delta$ to $\Delta$.
It follows that $h=f$ since $h^{\ast}\delta=e_{\ast}\delta=f^{\ast}\delta$. On the one hand, there exists a morphism $s:$ $T\rightarrow F$ such that $xs=t^{2}-t$ since $y(t^{2}-t)=0$. Hence $(t^{2}-t)^{2}=(t^{2}-t)xs=(t^{2}x-tx)s=0$.  On the other hand, we set $r=t+u-2tu$ where $u=t^{2}-t$. Noting that $u^2=0$ and $ut=tu$, we obtain that $r^{2}=t^{2}+2tu-4t^{2}u$ and then replacing $t^{2}$ by $u+t$, we have $r^{2}=t+u-2tu$. It is easy to see that $r$ makes the right hand square commutative. By Lemma \ref{first}, there exists an idempotent $g: F\rightarrow F$ such that $(g,r,e)$ is a morphism of $\mathbb{E}$-triangles  from $\Delta$ to $\Delta$. Note that $F\simeq(F,g)\oplus(F,\id_{F}-g)=:F_{1}\oplus F_{2}$, $T\simeq(T,r)\oplus(T,\id_{T}-r)=:T_{1}\oplus T_{2}$ and we have the following commutative diagram
$$\xymatrix{
 (F,\id_{F})  \ar[r]^{x}\ar[d]^{\cong} & (T,\id_{T})  \ar[r]^{y} \ar[d]^{\cong}& (C,\id_{C}) \ar[d]^{\cong} \ar@{-->}[r]^{\widetilde{\delta}} &  \\
  F_{1}\oplus F_{2}\ar[r]^{v} & T_{1}\oplus T_{2} \ar[r]^{w} & C_{1}\oplus C_{2}\ar@{-->}[r]^{\widetilde{\delta'}} &    }
  $$
where $v=\tiny\begin{pmatrix} xg&0\\0&x(\id_{F}-g)\end{pmatrix}$, $w=\tiny\begin{pmatrix} yr&0\\0&y(\id_{T}-
r)\end{pmatrix}$. In addition, by Lemma \ref{11}, we know that $\widetilde{\delta'}=\tiny\begin{pmatrix} g_{\bullet}e^{\bullet}\widetilde{\delta}&0\\0&(\id_{F}-g)_{\bullet}(\id_{C}-e)^{\bullet}\widetilde{\delta}\end{pmatrix}$. This shows that

$$F_{1}\stackrel{xf}\longrightarrow T_{1}\stackrel{yt}\longrightarrow C\stackrel{g_{\bullet}e^{\bullet}\widetilde{\delta}}\dashrightarrow$$
is an $\mathbb{F}$-triangle in $\widetilde{\mathscr{C}}$. Similarly, we can prove the third condition in the definition of a cotorsion pair. \end{proof}

It should be noted that if $\mathscr{C}$ is a triangulated category, then $(\mathcal{T},\mathcal{F})$ is a cotorsion pair if and only if $(\mathcal{T}[-1],\mathcal{F})$ is a torsion pair in \cite[Definition 2.2]{Iy2}. In particular, we have the following
\begin{corollary}
Let $\mathscr{C}$ be a triangulated category. Then any torsion pair $(\mathcal{T},\mathcal{F})$ in $\mathscr{C}$ which satisfies $\mathcal{F}[-1]\subseteq\mathcal{F}$ or $\mathcal{T}[1]\subseteq\mathcal{T}$ induces a torsion pair in $\widetilde{\mathscr{C}}$.
\end{corollary}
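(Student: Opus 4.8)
The plan is to deduce the corollary directly from Theorem \ref{4.1} by passing through the dictionary between torsion pairs and cotorsion pairs recorded just before the statement. Since $\mathscr{C}$ is triangulated, so is its idempotent completion $\widetilde{\mathscr{C}}$ (Balmer--Schlichting), with suspension acting termwise by $(A,e)[1]=(A[1],e[1])$; thus torsion pairs in $\widetilde{\mathscr{C}}$ make sense and the same dictionary is available there. Given a torsion pair $(\mathcal{T},\mathcal{F})$ in $\mathscr{C}$, the observation preceding the corollary shows that $(\mathcal{T}[1],\mathcal{F})$ is a cotorsion pair in $\mathscr{C}$, and this is the object to which I would apply Theorem \ref{4.1}.

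First I would check the extra hypothesis $\mathscr{C}(\mathcal{T}[1],\mathcal{F})=0$ required by Theorem \ref{4.1}. Because $(\mathcal{T},\mathcal{F})$ is a torsion pair we have $\Hom(\mathcal{T},\mathcal{F})=0$. If $\mathcal{T}[1]\subseteq\mathcal{T}$, then $\Hom(\mathcal{T}[1],\mathcal{F})\subseteq\Hom(\mathcal{T},\mathcal{F})=0$. If instead $\mathcal{F}[-1]\subseteq\mathcal{F}$, then the suspension isomorphism gives $\Hom(\mathcal{T}[1],\mathcal{F})\cong\Hom(\mathcal{T},\mathcal{F}[-1])\subseteq\Hom(\mathcal{T},\mathcal{F})=0$. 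In either case the hypothesis holds, so Theorem \ref{4.1} produces a cotorsion pair $(\widetilde{\mathcal{T}[1]},\widetilde{\mathcal{F}})$ in $\widetilde{\mathscr{C}}$.

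Finally I would translate back. By the dictionary applied in $\widetilde{\mathscr{C}}$, the cotorsion pair $(\widetilde{\mathcal{T}[1]},\widetilde{\mathcal{F}})$ corresponds to the torsion pair $(\widetilde{\mathcal{T}[1]}[-1],\widetilde{\mathcal{F}})$, so it remains to identify $\widetilde{\mathcal{T}[1]}[-1]$ with $\widetilde{\mathcal{T}}$. Since $[-1]$ is an autoequivalence of $\widetilde{\mathscr{C}}$ acting by $(A,e)\mapsto(A[-1],e[-1])$, it carries each object $(T[1],t)$ of $\widetilde{\mathcal{T}[1]}$ to $(T,t[-1])$ with $T\in\mathcal{T}$ and $t[-1]$ idempotent, and every idempotent endomorphism of $T$ arises as some $t[-1]$; hence $\widetilde{\mathcal{T}[1]}[-1]=\widetilde{\mathcal{T}}$, and $(\widetilde{\mathcal{T}},\widetilde{\mathcal{F}})$ is the desired torsion pair in $\widetilde{\mathscr{C}}$.

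The genuinely new input is Theorem \ref{4.1}; the rest is bookkeeping. The main point to handle carefully is that the two operations in play, idempotent completion and suspension, commute in the strong sense $\widetilde{\mathcal{T}[1]}[-1]=\widetilde{\mathcal{T}}$. This rests on $[\pm 1]$ being autoequivalences that match the idempotent endomorphisms of $T$ with those of $T[1]$ bijectively, and on the suspension of $\widetilde{\mathscr{C}}$ being the termwise extension of that of $\mathscr{C}$; one should also confirm that the hom-vanishing hypothesis, rather than merely the $\mathbb{E}$-vanishing condition, is exactly what the two alternative shift-stability assumptions supply.
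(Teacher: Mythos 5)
Your proposal is correct and follows exactly the route the paper intends: the corollary is stated as an immediate consequence of Theorem \ref{4.1} via the dictionary between torsion pairs $(\mathcal{T},\mathcal{F})$ and cotorsion pairs $(\mathcal{T}[1],\mathcal{F})$, and your verification that either shift-stability hypothesis yields $\mathscr{C}(\mathcal{T}[1],\mathcal{F})=0$, together with the identification $\widetilde{\mathcal{T}[1]}[-1]=\widetilde{\mathcal{T}}$, fills in precisely the bookkeeping the paper leaves implicit.
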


\subsection{Recollements}

In this subsection, we introduce the notion of right (left) recollements of extriangulated categories, and prove that
the idempotent completion of a right (resp. left) recollement of extriangulated categories is still a right ( resp. left) recollement, which generalizes \cite[Theorem 4]{CT}.

Let $\mathscr{C}$ and $\mathscr{D}$ be additive categories, and $F:\mathscr{C}\to \mathscr{D}$ an additive functor. Then
there is an additive functor
\begin{align*}
  \widetilde{F}: & \ \ \xymatrix{ \widetilde{\mathscr{C}} \ar[r]& \widetilde{\mathscr{D}}} \\
   & \xymatrix@C=0.6cm{(A,e)\ar@{|->}[r] \ar[d]^\alpha &  (FA,Fe)\ar[d]^{F\alpha}\\
                        (A',e')\ar@{|->}[r]  &  (FA',Fe').}
\end{align*}

Combining the functor $\iota_{\mathscr{C}}: \mathscr{C}\rightarrow\widetilde{ \mathscr{C}}$ given by $A\mapsto (A,\id_{A})$, one has a commutative diagram as follows:
$$
\xymatrix{\mathscr{C}\ar[r]^-{\iota_{\mathscr{C}}}\ar[d]^-F&\widetilde{ \mathscr{C}}\ar[d]^-{\widetilde{F}}\\
\mathscr{D}\ar[r]^-{\iota_{\mathscr{D}}}&\widetilde{\mathscr{D}}.}
$$

\begin{lemma}\label{5.1}
Let $\mathscr{C}$ and $\mathscr{D}$ be extriangulated categories, and $F:\mathscr{C}\to \mathscr{D}$ an exact functor. Then
the induced functor $\widetilde{F}:\widetilde{\mathscr{C}}\to \widetilde{\mathscr{D}}$ is exact.
\end{lemma}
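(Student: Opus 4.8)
The plan is to prove that $\widetilde F$ carries every conflation of $\widetilde{\mathscr C}$ to a conflation of $\widetilde{\mathscr D}$, by exhibiting an arbitrary conflation of $\widetilde{\mathscr C}$ as a direct summand of the image under $\iota_{\mathscr C}$ of an honest conflation of $\mathscr C$, pushing the latter through the commutative square $\widetilde F\circ\iota_{\mathscr C}=\iota_{\mathscr D}\circ F$, and then peeling off the unwanted summand with Lemma \ref{9}.

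First I would fix a conflation of $\widetilde{\mathscr C}$; up to equivalence I may take it to be $\Delta_1=\mathfrak c(\widetilde\omega):\widetilde A\to\widetilde B\to\widetilde C$ for some $\widetilde\omega\in\mathbb F(\widetilde C,\widetilde A)$, with $\widetilde A=(A,e_a)$ and $\widetilde C=(C,e_c)$. Following the construction of $\mathfrak c$ in Step 2, set $\widetilde{A'}=(A,\id_A-e_a)$ and $\widetilde{C'}=(C,\id_C-e_c)$, so that $\widetilde A\oplus\widetilde{A'}\cong\iota_{\mathscr C}A$ and $\widetilde C\oplus\widetilde{C'}\cong\iota_{\mathscr C}C$. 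The $\mathbb F$-extension used there is $\widetilde\omega\oplus\widetilde 0\in\mathbb F(\widetilde C\oplus\widetilde{C'},\widetilde A\oplus\widetilde{A'})$, where $\widetilde 0\in\mathbb F(\widetilde{C'},\widetilde{A'})$ is split, and it corresponds under the isomorphisms above to an $\mathbb E$-extension $\omega_0\in\mathbb E(C,A)$. By additivity of $\mathfrak c$ (Lemma \ref{2}), $\mathfrak c(\widetilde\omega\oplus\widetilde 0)=\mathfrak c(\widetilde\omega)\oplus\mathfrak c(\widetilde 0)$, so the $\mathbb E$-triangle $\iota_{\mathscr C}(\Delta_0)$ realizing $\omega_0$ (with $\Delta_0=\mathfrak s(\omega_0)$ a conflation of $\mathscr C$) is equivalent, as a conflation of $\widetilde{\mathscr C}$, to $\Delta_1\oplus\Theta$, where $\Theta:\widetilde{A'}\to\widetilde{A'}\oplus\widetilde{C'}\to\widetilde{C'}$ is the split conflation realizing $\widetilde 0$. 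Diagram (\ref{remark}) is precisely the inclusion of $\Delta_1$ into this sum.

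Next I would apply $\widetilde F$. Since $\widetilde F$ is additive it respects the decomposition and sends the split conflation $\Theta$ to a split conflation $\widetilde F\Theta$ of $\widetilde{\mathscr D}$. By the commutativity $\widetilde F\circ\iota_{\mathscr C}=\iota_{\mathscr D}\circ F$, exactness of $F$ (so $F\Delta_0$ is a conflation of $\mathscr D$), and exactness of $\iota_{\mathscr D}$ (which holds because $\mathbb F$ restricts to $\mathbb E$ and $\mathfrak c$ restricts to $\mathfrak s$ on $\mathbb E$-extensions), the sequence $\widetilde F(\iota_{\mathscr C}\Delta_0)=\iota_{\mathscr D}(F\Delta_0)$ is a conflation of $\widetilde{\mathscr D}$. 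Writing the split summand first, the block-diagonal sum $\widetilde F\Theta\oplus\widetilde F\Delta_1$ is therefore a conflation, and its first component $\widetilde F\Theta$ is the direct sum of the split $\mathbb F$-triangles $(\id,0,0)$ and $(0,\id,0)$. Lemma \ref{9} then forces the complementary component $\widetilde F\Delta_1=\widetilde F\widetilde A\to\widetilde F\widetilde B\to\widetilde F\widetilde C$ to be a conflation, which is exactly what is needed.

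The hard part will be the first reduction, namely making rigorous the claim that $\iota_{\mathscr C}(\Delta_0)\cong\Delta_1\oplus\Theta$. This rests on correctly reading off, from the definition of $\mathfrak c$, the complementary idempotent objects $\widetilde{A'},\widetilde{C'}$ and the splitting recorded in diagram (\ref{remark}), and on verifying that the relevant $\mathbb F$-extension genuinely decomposes as $\widetilde\omega\oplus\widetilde 0$ so that the additivity of $\mathfrak c$ from Lemma \ref{2} can be invoked. Once that decomposition is secured, the remaining steps are purely formal: additivity of $\widetilde F$, the commutative square, exactness of $F$ and of $\iota_{\mathscr D}$, and a single application of Lemma \ref{9}.
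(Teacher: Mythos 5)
Your proposal is correct and follows essentially the same route as the paper: the paper's proof also realizes the given conflation $\vartriangle$ of $\widetilde{\mathscr C}$ as a summand of an $\mathbb{E}$-triangle coming from $\mathscr C$ (your $\Delta_1\oplus\Theta\cong\iota_{\mathscr C}\Delta_0$), applies exactness of $F$, and then recovers $\widetilde F(\vartriangle)$ as a conflation from the decomposition of $\widetilde F(\vartriangle)\oplus\widetilde F(\vartriangle')$. You merely make explicit what the paper leaves implicit, namely that the complementary summand is the split conflation on the complementary idempotents and that Lemma \ref{9} justifies peeling it off.
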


\begin{proof}
Let $\vartriangle$ be an $\mathbb{F}$-triangle in $\widetilde{\mathscr{C}}$. Then there exists an $\mathbb{F}$-triangle $\vartriangle'$ in $\widetilde{\mathscr{C}}$ such that $\vartriangle\oplus \vartriangle'$ is an $\mathbb{E}$-triangle in $\mathscr{C}$. Since $F$ is exact,
$F(\vartriangle\oplus \vartriangle')\cong \widetilde{F}(\vartriangle)\oplus \widetilde{F}(\vartriangle')$ is an $\mathbb{E}$-triangle in $\mathscr{D}$. Thus
$\widetilde{F}(\vartriangle) $ is an $\mathbb{F}$-triangle in $\widetilde{\mathscr{D}}$, which shows that $\widetilde{F}$ is exact.
\end{proof}

Now given two additive functors $F,G$ and a natural transformation $\eta$ as
\begin{equation*}\label{T1}
 \xymatrix@C=0.5cm@R=0.02cm{&\ar@{=>}[dd]^\eta&\\
\mathscr{C}\ar@/^1pc/[rr]^F\ar@/_1pc/[rr]_G && \mathscr{D}.\\
&&}
\end{equation*}
Then there is a natural transformation $\widetilde{\eta}$ as
\begin{equation*}\label{T2}
 \xymatrix@C=0.5cm@R=0.005cm{&\ar@{=>}[dd]^{\widetilde{\eta}}&\\
\widetilde{\mathscr{C}}\ar@/^1pc/[rr]^{\widetilde{F}}\ar@/_1pc/[rr]_{\widetilde{G}} && \widetilde{\mathscr{D}}\\
&&}
\end{equation*}
satisfying $\widetilde{\eta}_{(A,e)}=G(e)\circ \eta_A\circ F(e)$ for any $(A,e)\in\widetilde{\mathscr{C}}$.

\begin{lemma}\label{5.2}
Keep the notations as above. Let $\mathscr{C}$ and $\mathscr{D}$ be extriangulated categories. If $(F,G)$ is an adjoint pair of exact functors, then so is $(\widetilde{F},\widetilde{G})$.
\end{lemma}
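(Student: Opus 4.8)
The plan is to separate the statement into its two assertions and dispatch the exactness part immediately. Since $F$ and $G$ are exact, Lemma \ref{5.1} shows at once that $\widetilde{F}$ and $\widetilde{G}$ are exact functors between $\widetilde{\mathscr{C}}$ and $\widetilde{\mathscr{D}}$. Thus the whole content is to produce an adjunction between $\widetilde{F}$ and $\widetilde{G}$, and for this I would transport the given adjunction along the idempotent completion rather than build anything new.

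Concretely, write $\phi_{A,D}\colon \Hom_{\mathscr{D}}(FA,D)\stackrel{\sim}{\longrightarrow}\Hom_{\mathscr{C}}(A,GD)$ for the adjunction isomorphism, natural in $A\in\mathscr{C}$ and $D\in\mathscr{D}$. Fix $(A,e)\in\widetilde{\mathscr{C}}$ and $(D,f)\in\widetilde{\mathscr{D}}$, and recall $\widetilde{F}(A,e)=(FA,Fe)$ and $\widetilde{G}(D,f)=(GD,Gf)$. By the definition of morphisms in an idempotent completion,
$$\Hom_{\widetilde{\mathscr{D}}}(\widetilde{F}(A,e),(D,f))=\{\beta\in\Hom_{\mathscr{D}}(FA,D)\mid f\circ\beta\circ Fe=\beta\}$$
is precisely the image of the idempotent endomorphism $\beta\mapsto f\circ\beta\circ Fe$ of $\Hom_{\mathscr{D}}(FA,D)$ (one checks that $f\circ\beta\circ Fe=\beta$ is equivalent to the two conditions $\beta\circ Fe=\beta=f\circ\beta$), and likewise $\Hom_{\widetilde{\mathscr{C}}}((A,e),\widetilde{G}(D,f))$ is the image of the idempotent $\alpha\mapsto Gf\circ\alpha\circ e$ of $\Hom_{\mathscr{C}}(A,GD)$. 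The key observation is that $\phi$ intertwines these two idempotents: naturality of $\phi$ in the first variable applied to $e\colon A\to A$ and in the second variable applied to $f\colon D\to D$ yields $\phi(f\circ\beta\circ Fe)=Gf\circ\phi(\beta)\circ e$. Since an isomorphism intertwining two idempotents restricts to an isomorphism between their images, $\phi_{A,D}$ restricts to an isomorphism
$$\Hom_{\widetilde{\mathscr{D}}}(\widetilde{F}(A,e),(D,f))\stackrel{\sim}{\longrightarrow}\Hom_{\widetilde{\mathscr{C}}}((A,e),\widetilde{G}(D,f)).$$

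It then remains to verify that this restricted isomorphism is natural in $(A,e)\in\widetilde{\mathscr{C}}$ and $(D,f)\in\widetilde{\mathscr{D}}$, which is inherited directly from the naturality of $\phi$, since morphisms in the completions are particular morphisms of $\mathscr{C}$ and $\mathscr{D}$. Equivalently, one may route the argument through unit and counit, matching the construction of $\widetilde{\eta}$ preceding the lemma: applying it to the unit $\varepsilon\colon \id_{\mathscr{C}}\to GF$ and the counit $\varpi\colon FG\to \id_{\mathscr{D}}$ produces natural transformations $\widetilde{\varepsilon}\colon \id_{\widetilde{\mathscr{C}}}\to \widetilde{G}\widetilde{F}$ and $\widetilde{\varpi}\colon \widetilde{F}\widetilde{G}\to \id_{\widetilde{\mathscr{D}}}$ (using $\widetilde{GF}=\widetilde{G}\widetilde{F}$ and $\widetilde{FG}=\widetilde{F}\widetilde{G}$), after which one checks the two triangle identities for $(\widetilde{F},\widetilde{G})$. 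I expect the only delicate point to be the bookkeeping with idempotents: in the Hom route, confirming that the defining conditions of the two hom-sets genuinely correspond under $\phi$; in the unit--counit route, checking that the inserted idempotents $Fe$ and $GFe$ can be absorbed using the relations $\alpha\circ e=\alpha=Gf\circ\alpha$ satisfied by morphisms of the completion. Once this correspondence is in place, the adjunction for $(\widetilde{F},\widetilde{G})$ follows formally from the naturality and triangle identities already available for $(F,G)$.
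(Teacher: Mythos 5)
Your argument is correct and follows essentially the same route as the paper: both restrict the adjunction isomorphism $\Hom_{\mathscr{D}}(FA,D)\cong\Hom_{\mathscr{C}}(A,GD)$ to the subsets cut out by the idempotent conditions, using naturality in $e$ and $f$ to see that the conditions correspond. Your "intertwining idempotents" phrasing and the explicit appeal to Lemma \ref{5.1} for exactness are slightly more careful packaging of what the paper does, but the substance is identical.
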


\begin{proof}
Assume that $(F,G)$ is an adjoint pair. Then for any $A\in\mathscr{C}$ and any $B\in\mathscr{D}$, there is a natural isomorphism
$$
\eta_{A,B}:\mbox{Hom}_{\mathscr{D}}(FA,B)\overset{\simeq} \longrightarrow \mbox{Hom}_{\mathscr{C}}(A,GB).
$$
We claim that there is a induced natural isomorphism $$
\widetilde{\eta}_{A,B}:\mbox{Hom}_{\widetilde{\mathscr{D}}}(\widetilde{F}(A,e),(B,f)) \longrightarrow \mbox{Hom}_{\widetilde{\mathscr{C}}}((A,e),\widetilde{G}(B,f))
$$
 for any $(A,e)\in\widetilde{\mathscr{C}}$ and any $(B,f)\in\widetilde{\mathscr{D}}$.
Indeed, for any $\alpha\in \mbox{Hom}_{\widetilde{\mathscr{D}}}(\widetilde{F}(A,e),(B,f))$, we have
\begin{align*}
  \alpha\circ F(e)=\alpha=f\circ \alpha & \Rightarrow \ {\eta}_{A,B}(\alpha)\circ e={\eta}_{A,B}(\alpha)=G(f)\circ {\eta}_{A,B}(\alpha) \\
   & \Rightarrow \ {\eta}_{A,B}(\alpha)\in \mbox{Hom}_{\widetilde{\mathscr{C}}}((A,e),\widetilde{G}(B,f)).
\end{align*}
Moreover, let $\alpha\in\mbox{Hom}_{\mathscr{D}}(FA,B)$ and ${\eta}_{A,B}(\alpha)\in \mbox{Hom}_{\widetilde{\mathscr{C}}}((A,e),\widetilde{G}(B,f))$. Then
\begin{align*}
  {\eta}_{A,B}(\alpha)\circ e={\eta}_{A,B}(\alpha)=G(f)\circ {\eta}_{A,B}(\alpha) & \Rightarrow \ \alpha\circ F(e)=\alpha=f\circ \alpha \\
   & \Rightarrow \ \alpha\in \mbox{Hom}_{\widetilde{\mathscr{D}}}(\widetilde{F}(A,e),(B,f)).
\end{align*}
This finishes the proof.
\end{proof}

\begin{definition}\label{5.3}
Let $\mathscr{C}$, $\mathscr{C}'$ and $\mathscr{C}''$ be three extriangulated categories. A \emph{right recollement} of $\mathscr{C}$ relative to
$\mathscr{C}'$ and $\mathscr{C}''$ is a diagram
$$
\xymatrix@C=1.5cm{
\mathscr{C}'\ar@/^0.8pc/[r]|{i_!} & \mathscr{C}\ar@/^0.8pc/[l]|{i^!}\ar@/^0.8pc/[r]|{j^\dag} & \mathscr{C}''\ar@/^0.8pc/[l]|{j_\dag}
}
$$
given by exact functors $i^!$, $i_!$, $j^\dag$ and $j_\dag$ which satisfies the following conditions:
\begin{itemize}
  \item [(RR1)] $(i_!, i^!)$ and $(j^\dag, j_\dag)$ are adjoint pairs.
  \item [(RR2)] $i^! j_\dag=0$.
  \item [(RR3)] $i_!$ and $j_\dag$ are fully faithful.
  \item [(RR4)] For each $A\in\mathscr{C}$, there is an $\mathbb{E}$-triangle
  $$
  \xymatrix{i_!i^!A\ar[r]^-{\theta_A}&A\ar[r]^-{\vartheta_A}&j_\dag j^\dag A\ar@{-->}[r]^-\delta&}
  $$
  in $\mathscr{C}$, where $\theta_A$ and $\vartheta_A$ are given by the adjunction morphisms.
\end{itemize}
\end{definition}

Now we show that the idempotent completion of a right recollement of extriangulated categories is still a right recollement.

\begin{theorem}\label{5.4}
Let $\mathscr{C}$, $\mathscr{C}'$ and $\mathscr{C}''$ be three extriangulated categories. Assume that there is a right recollement
$$
\xymatrix@C=1.5cm{
\mathscr{C}'\ar@/^0.8pc/[r]|{i_!} & \mathscr{C}\ar@/^0.8pc/[l]|{i^!}\ar@/^0.8pc/[r]|{j^\dag} & \mathscr{C}''.\ar@/^0.8pc/[l]|{j_\dag}
}
$$
Then $\widetilde{\mathscr{C}}$ admits a right recollement relative to  $\widetilde{\mathscr{C}'}$ and $\widetilde{\mathscr{C}''}$
$$
\xymatrix@C=1.5cm{
\widetilde{\mathscr{C}'}\ar@/^0.8pc/[r]|{\widetilde{i_!}} & \widetilde{\mathscr{C}}\ar@/^0.8pc/[l]|{\widetilde{i^!}}\ar@/^0.8pc/[r]|{\widetilde{j^\dag}} & \widetilde{\mathscr{C}''}.\ar@/^0.8pc/[l]|{\widetilde{j_\dag}}
}
$$
\end{theorem}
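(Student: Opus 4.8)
The plan is to verify the four axioms (RR1)--(RR4) for the induced diagram. By Lemma \ref{5.1} each of $\widetilde{i_!},\widetilde{i^!},\widetilde{j^\dag},\widetilde{j_\dag}$ is exact, so the diagram consists of exact functors, and (RR1) is immediate from Lemma \ref{5.2}: since $(i_!,i^!)$ and $(j^\dag,j_\dag)$ are adjoint pairs of exact functors, $(\widetilde{i_!},\widetilde{i^!})$ and $(\widetilde{j^\dag},\widetilde{j_\dag})$ are adjoint pairs.

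For (RR2) and (RR3) I would argue directly on objects and morphisms. The completion of functors is compatible with composition, $\widetilde{G}\,\widetilde{F}=\widetilde{GF}$, because $\widetilde{G}\widetilde{F}(A,e)=(GFA,GFe)$; hence $\widetilde{i^!}\,\widetilde{j_\dag}=\widetilde{i^!j_\dag}=0$ from (RR2) for the original recollement. For (RR3) it suffices to show that $\widetilde{F}$ is fully faithful whenever $F$ is. Faithfulness is clear. For fullness, given $\beta\colon (FA,Fe)\to(FA',Fe')$ with $\beta\,Fe=Fe'\,\beta=\beta$, choose $\alpha\colon A\to A'$ with $F\alpha=\beta$; then $F(\alpha e)=\beta\,Fe=\beta=F\alpha$ and $F(e'\alpha)=Fe'\,\beta=\beta=F\alpha$, so $\alpha e=\alpha=e'\alpha$ by faithfulness, i.e. $\alpha\in\widetilde{\mathscr{C}}((A,e),(A',e'))$ and $\widetilde{F}\alpha=\beta$. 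Applying this to $i_!$ and $j_\dag$ gives (RR3).

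The substance is (RR4). Fix $\widetilde{A}=(A,e)$ and write $p=i_!i^!e$, $q=j_\dag j^\dag e$, so that $\widetilde{i_!}\widetilde{i^!}(A,e)=(i_!i^!A,p)$ and $\widetilde{j_\dag}\widetilde{j^\dag}(A,e)=(j_\dag j^\dag A,q)$. I would start from the $\mathbb{E}$-triangle supplied by (RR4) for the object $\iota(A)=(A,\id_A)$, namely
$$i_!i^!A\stackrel{\theta_A}{\longrightarrow}A\stackrel{\vartheta_A}{\longrightarrow}j_\dag j^\dag A\stackrel{\delta}\dashrightarrow,$$
regarded through $\iota$ as an $\mathbb{F}$-triangle $\Delta$ in $\widetilde{\mathscr{C}}$. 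Naturality of the adjunction morphisms $\theta$ and $\vartheta$ makes the idempotent triple $(p,e,q)$ a morphism $\Delta\to\Delta$ of $\mathbb{F}$-triangles. Using the canonical isomorphisms $(A,\id_A)\cong(A,e)\oplus(A,\id_A-e)$ and likewise for $i_!i^!A$ and $j_\dag j^\dag A$, I would rewrite $\Delta$ as a direct sum of two three-term sequences; naturality then shows that $\theta_A$ and $\vartheta_A$ become block-diagonal with respect to $e$ and $\id_A-e$, the $e$-block being exactly the pair $(\theta_{(A,e)},\vartheta_{(A,e)})$ of adjunction morphisms of the completed pairs (a routine check from the explicit form of $\widetilde{\eta}$ in Lemma \ref{5.2}).

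It remains to see that the $\mathbb{F}$-extension decomposes diagonally as well, which is the only place a computation is needed. Writing the morphism-of-extensions condition for $(p,q)\colon\delta\to\delta$ as $p_{\ast}\delta=q^{\ast}\delta$, the off-diagonal component of the class is $p_{\ast}(\id-q)^{\ast}\delta=p_{\ast}\delta-p_{\ast}q^{\ast}\delta=p_{\ast}\delta-p_{\ast}^{2}\delta=0$, since $(\id-q)^{\ast}\delta=\delta-q^{\ast}\delta=\delta-p_{\ast}\delta$ and $p$ is idempotent; the symmetric computation kills the other off-diagonal term. Hence $\Delta\cong\Delta_e\oplus\Delta'$ as $\mathbb{F}$-triangles, where
$$\Delta_e\colon\ (i_!i^!A,p)\stackrel{\theta_{(A,e)}}{\longrightarrow}(A,e)\stackrel{\vartheta_{(A,e)}}{\longrightarrow}(j_\dag j^\dag A,q)\dashrightarrow$$
and $\Delta'$ is the complementary summand. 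By Lemma \ref{second} applied in the pre-extriangulated category $\widetilde{\mathscr{C}}$, the direct summand $\Delta_e$ is itself an $\mathbb{F}$-triangle, which is precisely the triangle required by (RR4) for $\widetilde{A}$; thus $\widetilde{\mathscr{C}}$ is a right recollement relative to $\widetilde{\mathscr{C}'}$ and $\widetilde{\mathscr{C}''}$. The only genuine obstacle is this diagonal splitting of the conflation by the idempotent $(p,e,q)$; once it is in place the remaining identifications are formal.
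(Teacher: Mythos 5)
Your proposal is correct and follows essentially the same route as the paper's proof: (RR1)--(RR2) are handled identically, and for (RR4) you use the same decomposition $(A,\id_A)\cong(A,e)\oplus(A,\id_A-e)$ together with the same key computation that the off-diagonal components of the extension vanish (your $p_{\ast}(\id-q)^{\ast}\delta=p_{\ast}\delta-p_{\ast}^{2}\delta=0$ is the same identity the paper derives from $(i_!i^!e)_\bullet\delta=(j_\dag j^\dag e)^\bullet\delta$ and idempotency), followed by splitting off the summand as an $\mathbb{F}$-triangle via Lemma \ref{second}. The only divergence is in (RR3), where you give a short direct proof that $\widetilde{F}$ is fully faithful whenever $F$ is, while the paper cites \cite[Lemma 11]{CT}; both are fine.
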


\begin{proof}
$(\RR1)$ By Lemma \ref{5.2}, $(\widetilde{i_!}, \widetilde{i^!})$ and $(\widetilde{j^\dag}, \widetilde{j_\dag})$ are adjoint pairs of exact functors.

$(\RR2)$ By assumption, $i^! j_\dag=0$. Thus $\widetilde{i^!}\widetilde{j_\dag}=\widetilde{i^! j_\dag}=0$.

$(\RR3)$ Since we can view each object of $\widetilde{\mathscr{C}'}$ as a direct summand of ${\mathscr{C}'}$, it follows from \cite[Lemma 11]{CT} that
  $\widetilde{i_!}$ is fully faithful. Similarly, $\widetilde{j_\dag}$ is fully faithful.

$(\RR4)$ Let $(A,e)\in \widetilde{\mathscr{C}}$. Then there is a commutative diagram
  $$
  \xymatrix{i_!i^!A\ar[r]^-{\theta_A}\ar[d]_{i_!i^!e}&A\ar[r]^-{\vartheta_A}\ar[d]^e&j_\dag j^\dag A\ar@{-->}[r]^-\delta\ar[d]^{j_\dag j^\dag e}&\\
  i_!i^!A\ar[r]^-{\theta_A}&A\ar[r]^-{\vartheta_A}&j_\dag j^\dag A\ar@{-->}[r]^-\delta&}
  $$
  in $\mathscr{C}$. In particular, $(i_!i^!e)_\bullet \delta=(j_\dag j^\dag e)^\bullet\delta$. Setting $A_1=(A,e)$ and $A_2=(A, \mbox{id}_A-e)$. Then
  we have the following commutative diagram
  $$
  \xymatrix@C=1.5cm@R=1.3cm{i_!i^!A\ar[r]^-{\theta_A}\ar[d]_{\tiny\begin{pmatrix}i_!i^!e\\i_!i^!(\mbox{id}_A-e)\end{pmatrix}}&
  A\ar[r]^-{\vartheta_A}\ar[d]|{\tiny\begin{pmatrix}e\\\mbox{id}_A-e\end{pmatrix}}
  &
  j_\dag j^\dag A\ar[d]\ar[d]^{\tiny\begin{pmatrix}j_\dag j^\dag e\\j_\dag j^\dag(\mbox{id}_A-e)\end{pmatrix}}&\\
  \widetilde{i_!}\widetilde{i^!}A_1\oplus\widetilde{i_!}\widetilde{i^!}A_2\ar[r]^-{\tiny\begin{pmatrix} \widetilde{\theta}_{A_1}&0\\0&\widetilde{\theta}_{A_2}\end{pmatrix}}&A_1\oplus A_2\ar[r]^-{{\tiny\begin{pmatrix} \widetilde{\vartheta}_{A_1}&0\\0&\widetilde{\vartheta}_{A_2}\end{pmatrix}}}&\widetilde{j_\dag} \widetilde{j^\dag} A_1\oplus \widetilde{j_\dag} \widetilde{j^\dag} A_2,&}
  $$
  where the vertical morphisms are isomorphisms by \cite[Lemma 12]{CT}. Moreover, by Lemma \ref{11},
  $$
  \xymatrix@C=1.3cm{\widetilde{i_!}\widetilde{i^!}A_1\oplus\widetilde{i_!}\widetilde{i^!}A_2\ar[r]^-{\tiny\begin{pmatrix} \widetilde{\theta}_{A_1}&0\\0&\widetilde{\theta}_{A_2}\end{pmatrix}}&A_1\oplus A_2\ar[r]^-{{\tiny\begin{pmatrix} \widetilde{\vartheta}_{A_1}&0\\0&\widetilde{\vartheta}_{A_2}\end{pmatrix}}}&\widetilde{j_\dag} \widetilde{j^\dag} A_1\oplus \widetilde{j_\dag} \widetilde{j^\dag} A_2\ar@{-->}[r]^-{\delta'}&}
  $$
  is an $\mathbb{E}$-triangle,
  where
  \begin{align*}
    \delta' & ={(j_\dag j^\dag e, j_\dag j^\dag(\mbox{id}_A-e))^\bullet}{\begin{pmatrix}i_!i^!e\\i_!i^!(\mbox{id}_A-e)\end{pmatrix}}_\bullet \delta \\
     & =\begin{pmatrix} (i_!i^!e)_\bullet(j_\dag j^\dag e)^\bullet\delta &  (i_!i^!e)_\bullet(j_\dag j^\dag (\mbox{id}_A-e))^\bullet\delta\\
                         (i_!i^!(\mbox{id}_A-e))_\bullet(j_\dag j^\dag e)^\bullet\delta &  (i_!i^!(\mbox{id}_A-e))_\bullet(j_\dag j^\dag (\mbox{id}_A-e))^\bullet\delta \end{pmatrix}.
  \end{align*}
Here,
\begin{align*}
  (i_!i^!e)_\bullet(j_\dag j^\dag (\mbox{id}_A-e))^\bullet\delta & =(j_\dag j^\dag (\mbox{id}_A-e))^\bullet(i_!i^!e)_\bullet\delta \\
   & =(j_\dag j^\dag (\mbox{id}_A-e))^\bullet (j_\dag j^\dag e)^\bullet\delta\\
   &=(j_\dag j^\dag(e(\mbox{id}_A-e)))^\bullet\delta\\
   &=0.
\end{align*}
Similarly, $(i_!i^!(\mbox{id}_A-e))_\bullet(j_\dag j^\dag e)^\bullet\delta=0$.
Thus
$$\delta'=\begin{pmatrix} (i_!i^!e)_\bullet(j_\dag j^\dag e)^\bullet\delta &  0\\
                        0 &  (i_!i^!(\mbox{id}_A-e))_\bullet(j_\dag j^\dag (\mbox{id}_A-e))^\bullet\delta \end{pmatrix}.$$
Set ${\delta}_{A_1}=(i_!i^!e)_\bullet(j_\dag j^\dag e)^\bullet\delta$ and ${\delta}_{A_2}=(i_!i^!(\mbox{id}_A-e))_\bullet(j_\dag j^\dag (\mbox{id}_A-e))^\bullet\delta$.                     Then $$\xymatrix{\widetilde{i_!}\widetilde{i^!}A_1\ar[r]^-{\widetilde{\theta}_{A_1}}&A_1\ar[r]^-{\widetilde{\vartheta}_{A_1}}&\widetilde{j_\dag} \widetilde{j^\dag} A_1\ar@{-->}[r]^-{{\delta}_{A_1}}&}$$
and
$$
\xymatrix{\widetilde{i_!}\widetilde{i^!}A_2\ar[r]^-{\widetilde{\theta}_{A_2}}&A_2\ar[r]^-{\widetilde{\vartheta}_{A_2}}&\widetilde{j_\dag} \widetilde{j^\dag} A_2\ar@{-->}[r]^-{{\delta}_{A_2}}&}$$
are {$\mathbb{F}$-triangles in $\widetilde{\mathscr{C}}$}. Using a similar argument to that of \cite[Remark 10]{CT}, we know that $\widetilde{\theta}_{A_1}$, $\widetilde{\theta}_{A_2}$, $\widetilde{\vartheta}_{A_1}$ and $\widetilde{\vartheta}_{A_2}$ are the adjunction morphisms. This completes the proof.
\end{proof}

Dually, one has the notion of left recollements of extriangulated categories.

\begin{definition}
A \emph{recollement} of extriangulated categories is a diagram
$$
  \xymatrix{\mathscr{C}'\ar[rr]|{i_{\dag}=i_!}&&\ar@/_1pc/[ll]|{i^{\dag}}\ar@/^1pc/[ll]|{i^{!}}\mathscr{C}
\ar[rr]|{j^!=j^{\dag}}&&\ar@/_1pc/[ll]|{j_{!}}\ar@/^1pc/[ll]|{j_{\dag}}\mathscr{C}''}
$$
of extriangulated categories and exact functors such that
$$
\xymatrix@C=1.5cm{
\mathscr{C}'\ar[r]|{i_!} & \mathscr{C}\ar@/^0.8pc/[l]|{i^!}\ar[r]|{j^\dag} & \mathscr{C}''\ar@/^0.8pc/[l]|{j_\dag}
}
$$
is a right recollement, and
$$
\xymatrix@C=1.5cm{
\mathscr{C}'\ar[r]|{i_{\dag}} & \mathscr{C}\ar@/_0.8pc/[l]|{i^\dag}\ar[r]|{j^!} & \mathscr{C}''\ar@/_0.8pc/[l]|{j_!}
}
$$
is a left recollement.
\end{definition}

Following Theorem \ref{5.4}
 and its dual, we have the following

\begin{theorem}
Let $\mathscr{C}$, $\mathscr{C}'$ and $\mathscr{C}''$ be three extriangulated categories. Assume that there is a recollement
$$
  \xymatrix{\mathscr{C}'\ar[rr]|{i_{\dag}=i_!}&&\ar@/_1pc/[ll]|{i^{\dag}}\ar@/^1pc/[ll]|{i^{!}}\mathscr{C}
\ar[rr]|{j^!=j^{\dag}}&&\ar@/_1pc/[ll]|{j_{!}}\ar@/^1pc/[ll]|{j_{\dag}}\mathscr{C}''.}
$$
Then $\widetilde{\mathscr{C}}$ admits a  recollement relative to  $\widetilde{\mathscr{C}'}$ and $\widetilde{\mathscr{C}''}$
$$
  \xymatrix{\widetilde{\mathscr{C}'}\ar[rr]|{\widetilde{i_{\dag}}=\widetilde{i_!}}&&\ar@/_1pc/[ll]|{\widetilde{i^{\dag}}}
  \ar@/^1pc/[ll]|{\widetilde{i^{!}}}\widetilde{\mathscr{C}}
\ar[rr]|{\widetilde{j^!}=\widetilde{j^{\dag}}}&&\ar@/_1pc/[ll]|{\widetilde{j_{!}}}\ar@/^1pc/[ll]|{\widetilde{j_{\dag}}}\widetilde{\mathscr{C}''}.}
$$
\end{theorem}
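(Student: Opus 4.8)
The plan is to decompose the given recollement into its two constituent pieces and to invoke Theorem \ref{5.4} together with its formal dual. By the definition of a recollement of extriangulated categories, the sub-diagram
$$
\mathscr{C}'\xrightarrow{\ i_!\ }\mathscr{C}\xrightarrow{\ j^\dag\ }\mathscr{C}''
$$
(with left adjoints $i^!$ and $j_\dag$) is a right recollement, while the sub-diagram
$$
\mathscr{C}'\xrightarrow{\ i_\dag\ }\mathscr{C}\xrightarrow{\ j^!\ }\mathscr{C}''
$$
(with right adjoints $i^\dag$ and $j_!$) is a left recollement. First I would apply Theorem \ref{5.4} to the first diagram, obtaining that
$$
\widetilde{\mathscr{C}'}\xrightarrow{\ \widetilde{i_!}\ }\widetilde{\mathscr{C}}\xrightarrow{\ \widetilde{j^\dag}\ }\widetilde{\mathscr{C}''}
$$
is a right recollement, with $(\widetilde{i_!},\widetilde{i^!})$ and $(\widetilde{j^\dag},\widetilde{j_\dag})$ adjoint pairs satisfying $(\RR1)$--$(\RR4)$. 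Dually, applying the dual of Theorem \ref{5.4} to the second diagram yields that
$$
\widetilde{\mathscr{C}'}\xrightarrow{\ \widetilde{i_\dag}\ }\widetilde{\mathscr{C}}\xrightarrow{\ \widetilde{j^!}\ }\widetilde{\mathscr{C}''}
$$
is a left recollement.

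Next I would record the auxiliary facts that make the two pieces legitimate recollement data. Lemma \ref{5.1} guarantees that every induced functor $\widetilde{i_!},\widetilde{i^!},\widetilde{i_\dag},\widetilde{i^\dag},\widetilde{j^\dag},\widetilde{j_\dag},\widetilde{j^!},\widetilde{j_!}$ is exact, and Lemma \ref{5.2} guarantees that each of the relevant adjunctions descends to the idempotent completions. The only genuinely new point to verify is the compatibility condition that glues a right recollement and a left recollement into a single recollement, namely that the two ``inclusion'' functors coincide and the two ``localization'' functors coincide after completion. Since the hypothesis provides the equalities $i_\dag=i_!$ and $j^!=j^\dag$ of functors on $\mathscr{C}$, and since the assignment $F\mapsto\widetilde{F}$ sends equal functors to equal functors, we immediately obtain $\widetilde{i_\dag}=\widetilde{i_!}$ and $\widetilde{j^!}=\widetilde{j^\dag}$ on $\widetilde{\mathscr{C}}$, which is exactly the identification demanded in the definition of a recollement.

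The hard part is already contained in Theorem \ref{5.4} and does not need to be redone here; the present statement is a formal consequence of that theorem, its dual, and the elementary observation of the previous paragraph. I do not expect any substantive obstacle: the dualization of Theorem \ref{5.4} is routine, and the gluing compatibility is automatic because the shared functors are literally equal before completion and hence remain equal afterward. Assembling the completed right recollement with the completed left recollement along these identified functors therefore produces the desired recollement of $\widetilde{\mathscr{C}}$ relative to $\widetilde{\mathscr{C}'}$ and $\widetilde{\mathscr{C}''}$, which completes the proof.
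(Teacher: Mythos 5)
Your proposal is correct and matches the paper's own argument, which likewise derives the result immediately from Theorem \ref{5.4} and its dual, the gluing being automatic since $i_\dag=i_!$ and $j^!=j^\dag$ are preserved by the completion $F\mapsto\widetilde{F}$. (One cosmetic slip: $i^!$ and $j_\dag$ are the \emph{right} adjoints of $i_!$ and $j^\dag$ in the right recollement, not left adjoints; this does not affect the argument.)
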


\section*{Acknowledgments}
The third author is grateful to Panyue Zhou for his helpful suggestions and comments. This work is supported partially by the National Natural Science Foundation of China (No.s 11801273,
11901341).

\end{document}